\documentclass[10pt, leqno]{amsart}

\usepackage{Style}

\title{On monomial resonance}

\author{C\u alin Spiridon}

\address{University of Bucharest, Faculty of Mathematics and Informatics, Academiei Str. 14 Bucharest, Romania \& "Simion Stoilow" Institute of Mathematics of the Romanian Academy, P.O. Box 1-764, Bucharest, Romania}

\email{cspiridon@imar.ro}

\thanks{The author have been partly funded by the project PNRR-III-C9-2022-I8 "Cohomological Hall algebras of smooth surfaces and applications" - CF 44/14.11.2022.}
\thanks{The author would like to thank professor Marian Aprodu for suggesting the problem.}

\date{\today}

\begin{document}

\begin{abstract}
This note addresses the resonance of monomial subspaces. In the first part, we completely characterize the cases where the Fitting scheme structure of the resonance associated with a monomial subspace is reduced, and we further investigate the primary decomposition of the corresponding Fitting ideal. The second part focuses on non-monomial subspaces having the resonance of a monomial subspace.
\end{abstract} 

\maketitle

%\tableofcontents

\section{Introduction}

Let $V$ be a finite dimensional vector space and $K \subseteq \bigwedge^2V$ a linear subspace. In \cite{papadima_vanishing_2015}, Papadima and Suciu attached to any such pair $(V,K)$ two closely related objects: an algebraic object, $W(V,K)$, which is a finitely generated graded module over the symmetric algebra of $V$, and a geometric one, $\cR(V,K)$, which is the support locus of $W(V,K)$. They are called the \emph{Koszul module} and the \emph{resonance variety} of the pair $(V,K)$ and both emerged from geometric group theory. The Koszul module generalize the infinitesimal Alexander invariant \cite{papadima_2004} and the resonance variety extend the notion of first resonance variety of a group. A sharp vanishing theorem for the graded pieces of the Koszul modules led to a new proof of generic Green conjecture \cite{aprodu_green_2019} and also to a better understanding of some important invariants of finitely generated groups, such as the Chen ranks, the degree of growth or virtual nilpotency class \cite{aprodu_topological_2022}. Other applications in algebraic geometry were provided by \cite{aprodu_vanishing_2024}. Due to the fact that the resonance $\cR(V,K)$ is the support of the Koszul module, one can endow the affine variety $\cR(V,K)$ with natural scheme structures, using either the annihilator ideal or the Fitting ideal of $W(V,K)$. The annihilator scheme structure of the resonance is considered in \cite{aprodu_reduced_2024}, since it has the advantage of being invariant under closed embeddings of the ambient affine spaces. However, the Fitting scheme structure of the resonance may capture more subtle algebraic or combinatorial information about the linear subspace $K$.

A particular and interesting case is when $V$ admits a basis $\{e_1, \ldots, e_n \}$ such that $K$ is \emph{monomial}, that is $K$ is generated by elements of the form $e_i \wedge e_j$. For this setup, a graph $\Gamma = (\mathsf{V}, \mathsf{E})$ can be naturally associated to the pair $(V,K)$, by taking $\mathsf{V} = \{1,\ldots, n\}$ and $\mathsf{E} = \{(i,j)\in \binom{\mathsf{V}}{2} : e_i \wedge e_j \in K \}$. We will denote the corresponding Koszul module and resonance variety by $W_\Gamma$ and $\cR_\Gamma$. The resonance varieties obtained starting from monomial subspaces recover the resonance of right-angled Artin groups, see \cite{papadima_2006}. It was recently shown \cite{aprodu_higher_2024} that the annihilator scheme structure of $\cR_\Gamma$ is reduced. 

\medskip

In the first part of this note, we completely characterize the cases where the Fitting scheme structure of $\cR_\Gamma$ is reduced, see Theorem \ref{thm_fitting}. We also describe the minimal components of the Fitting ideal $\fitt W_\Gamma$, when $\Gamma$ is a tree, Theorem \ref{thm_primary_decomp_trees}, and propose the Conjecture \ref{conj_minimal components} for arbitrary graphs, highlighting what additional combinatorial information does the Fitting scheme structure of the resonance provide compared with the annihilator one. If $| \overline{\mathsf{E}} | \ge n - 2$, we further propose a stronger Conjecture \ref{conj_fitt_ideal}, which determines the structure of the ideal $\fitt W_\Gamma$ in terms of the spanning trees of the graph $\Gamma$.

In the second part of the note, we address the question of whether a monomial subspace can have the same resonance as a non-monomial subspace. Some sufficient conditions on the graph $\Gamma$ under which this situation occurs are provided in Propositions \ref{prop_triangle} and \ref{prop_disconnecting_vertex}. We also show in Proposition \ref{prop_C4} that for the cycle graph on $4$ vertices $C_4$ there is no non-monomial subspace with the same resonance. 

\section{Brief review of Koszul modules and resonance varieties}
\label{sec:2}

Fix $\bk$ an algebraically closed field of characteristic $0$. Let $V$ be a finite dimensional $\bk$-vector space and $K \subseteq \bigwedge^2V$ a linear subspace in the second exterior power of $V$. Denote by $S = \sym(V)$ the symmetric algebra of $V$. The \emph{Koszul module} associated with the pair $(V,K)$ is the following graded $S$-module \cite[\S 2]{papadima_vanishing_2015}, \cite[Definition 2.1]{aprodu_topological_2022}:
\begin{equation} \label{eq:presentation}
W(V,K) = \coker \left( \displaystyle \textstyle{\bigwedge^3 V} \otimes S(-1) \xrightarrow{\ \pi\circ\delta_3\ } \displaystyle\frac{\textstyle{\bigwedge^2 V}}{K} \otimes S\right)   
\end{equation}
where $\pi: \bigwedge^2 V \otimes S \twoheadlongrightarrow (\bigwedge^2 V/K)\otimes S$ is the canonical projection and $\delta_3: \bigwedge^3 V \otimes S(-1) \longrightarrow \bigwedge^2 V \otimes S$ defined by 
\[
e_1 \wedge e_2 \wedge e_3 \otimes f \longmapsto e_1 \wedge e_2 \otimes e_3f - e_1 \wedge e_3 \otimes e_2f + e_2 \wedge e_3 \otimes e_1f
\]
is the third Koszul differential of the classical Koszul exact complex, which gives the minimal graded free resolution of $\bk \cong S/(V)$, see e.g. \cite{eisenbud_syzygies_2005}.

Now, let $(\--)^\vee: \operatorname{Vect}_\bk \longrightarrow \operatorname{Vect}_\bk$ be the functor sending a $\bk$-vector space to its dual and $K^\perp = (\bigwedge^2V/K)^\vee$ the space of skew-symmetric bilinear forms on $V$ vanishing on $K$. It was shown in \cite[Lemma 2.4]{papadima_vanishing_2015} that, if non-empty, the set-theoretic support of the Koszul module $W(V,K)$ is the \emph{resonance locus} of the pair $(V,K)$:
\begin{equation}
\cR(V,K) = \{a\in V^\vee : \exists \ b\in V^\vee \text{ such that } 0 \neq a\wedge b \in K^\perp \} \cup \{0 \}
\end{equation}
When endowed with the reduced structure, we refer to $\cR(V,K)$ as the \emph{resonance variety}. Nevertheless, the property of being the support of the Koszul module allows us to define a natural scheme structure on the resonance locus, by using the annihilator ideal of $W(V,K)$, namely, $\cR^{\ann}(V,K) = \operatorname{Spec}(S/\ann W(V,K))$, which is analyzed in \cite{aprodu_reduced_2024}. Taking into account the presentation (\ref{eq:presentation}) of the Koszul module, we can endow the resonance with another natural scheme structure, given by the ideal of maximal minors of the presentation matrix, that is the Fitting ideal $\fitt W(V,K)$, see \cite[\href{https://stacks.math.columbia.edu/tag/07Z6}{Tag 07Z6}]{stacks-project}. We obtain in this way $\cR^{\operatorname{Fitt}}(V,K) = \operatorname{Spec}(S/\fitt W(V,K))$. It is the aim of this note to get a better understanding of this latter scheme structure in a monomial setup.

\section{Monomial resonance}

Throughout this section, $V$ will be a vector space of dimension $n\ge 3$ over an algebraically closed field $\bk$ of characteristic $0$ and $K \subseteq \bigwedge^2V$ a linear subspace.

\subsection{First properties}

Describing the resonance of a pair $(V,K)$ is a difficult task in general. However, in some particular instances, the resonance variety is well understood. One of these cases is when the subspace $K$ is \emph{monomial}, that is, there exists a basis $\{e_1,\ldots, e_n\}$ of $V$ such that $K$ is generated by elements of the form $e_i \wedge e_j$. An undirected simple graph $\Gamma = (\mathsf{V}, \mathsf{E})$ can be easily attached to this setup in the following way: set $\mathsf{V} = \{1, \ldots, n\}$ and $\mathsf{E} = \{(i,j)\in \binom{\mathsf{V}}{2} : e_i \wedge e_j \in K \}$, where $\binom{\mathsf{V}}{2}$ denotes the set of ordered pairs from $\mathsf{V}$. We refer to the resonance variety of such a pair $(V,K)$ as a \emph{monomial resonance} and denote it by $\cR_\Gamma$. It turns out that $\cR_\Gamma$ is nothing but the resonance of the right-angled Artin group $G_\Gamma$, see \cite{papadima_2006}, \cite[Section 8]{aprodu_reduced_2024}, which is presented as
\[
G_\Gamma :=  \langle\ g_1, \ldots, g_n \ : \ g_i \cdot g_j = g_j \cdot g_i \ \text{for all} \ (i,j)\in \mathsf{E} \ \rangle
\]
The resonance variety of $G_\Gamma$ has been described in \cite[Theorem 5.5]{papadima_2006}, as a union of linear subspaces of $V^\vee$:
\begin{equation} \label{eq:res_coordinate}
\cR_\Gamma = \bigcup \Bigl\{V^\vee_{\Gamma'} : \Gamma' \ \text{is a maximally disconnected full subgraph of}\ \Gamma \Bigr\}   
\end{equation}
where $V^\vee_{\Gamma'}$ is the coordinate subspace of $V^\vee$ generated by the vertices of $\Gamma'$.

Now consider $S = \bk[x_1, \ldots, x_n]$ the coordinate ring of $V^\vee$. For an arbitrary subset $\mathsf{U} \subseteq \mathsf{V}$, we define $I_{\mathsf{U}}$ to be the ideal of $S$ generated by the variables $x_i$, with $i\in \mathsf{U}$. By convention, $I_\emptyset = (0)$. We denote by $I_\Gamma \subseteq S$ the ideal of the resonance variety $\cR_\Gamma \subseteq V^\vee$. 

\begin{defn} \label{defn_disconnecting}
A \emph{disconnecting set of vertices} of $\Gamma$ is a (possibly empty) subset $\mathsf{U} \subseteq \mathsf{V}$ such that the full subgraph of $\Gamma$ on the vertex set $\mathsf{V} \setminus \mathsf{U}$ is not connected. %A disconnecting set of $\Gamma$ which is minimal with respect to the inclusion is called \emph{minimal}. 
We denote by $\cD_\Gamma$ the set of \emph{minimal} (with respect to the inclusion) disconnecting sets of $\Gamma$.
\end{defn}

We may reformulate (\ref{eq:res_coordinate}) in the following way.

\begin{thm} \label{thm_structure_ideal_res}

The ideal of the resonance variety $\cR_\Gamma$ has the following primary decomposition:
\[
I_\Gamma = \bigcap_{\mathsf{U}\in \cD_\Gamma} I_{\mathsf{U}}
\]
\end{thm}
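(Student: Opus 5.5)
We will deduce the statement directly from the description (\ref{eq:res_coordinate}) of $\cR_\Gamma$ as a union of coordinate subspaces. The key observation is that the coordinate subspace $V^\vee_{\Gamma'}$ spanned by the vertices of a full subgraph $\Gamma'$ on a vertex set $\mathsf{W}\subseteq \mathsf{V}$ is the zero locus of the variables $x_i$ with $i\notin \mathsf{W}$. Its ideal is therefore the prime ideal $I_{\mathsf{V}\setminus\mathsf{W}}$. Since the ideal of a finite union of varieties is the intersection of their ideals, (\ref{eq:res_coordinate}) gives
\[
I_\Gamma=\bigcap_{\Gamma'} I_{\mathsf{V}\setminus \mathsf{V}(\Gamma')},
\]
where $\Gamma'$ runs over the maximally disconnected full subgraphs of $\Gamma$.

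It then remains to match the index sets, which is the combinatorial step. We claim that $\mathsf{W}\mapsto \mathsf{V}\setminus\mathsf{W}$ is a bijection between the vertex sets of maximal disconnected full subgraphs and $\cD_\Gamma$. By definition, $\mathsf{U}$ is disconnecting exactly when the full subgraph on $\mathsf{V}\setminus\mathsf{U}$ is disconnected. Complementation reverses inclusion, so maximality of $\mathsf{W}$ among vertex sets carrying a disconnected full subgraph corresponds to minimality of $\mathsf{U}$ among disconnecting sets. We will also handle the edge cases consistently with the paper's conventions:
\begin{itemize}
\item If $\Gamma$ itself is disconnected, then $\emptyset\in\cD_\Gamma$. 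We get $I_\emptyset=(0)$, matching $\cR_\Gamma=V^\vee$.
\item The empty graph on no vertices, or a single vertex, should be regarded as connected (or excluded), so that $\{0\}$ is not a spurious component.
\end{itemize}
We will state explicitly which convention for "disconnected" (at least two components, hence at least two vertices) is used, so that the two notions agree.

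Finally, the displayed intersection is a primary decomposition because each $I_{\mathsf{U}}$ is prime, being generated by variables. It is irredundant because the sets in $\cD_\Gamma$ are pairwise incomparable, and $I_{\mathsf{U}}\subseteq I_{\mathsf{U}'}$ holds iff $\mathsf{U}\subseteq\mathsf{U}'$. Hence no $I_{\mathsf{U}}$ contains the intersection of the others: a prime containing a finite intersection of primes contains one of them. The only real obstacle is the bookkeeping in the bijection and its degenerate cases; everything else is formal.
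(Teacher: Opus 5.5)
Your proposal is correct and follows the paper's approach: the paper obtains the theorem as a direct reformulation of (\ref{eq:res_coordinate}), identifying each coordinate subspace $V^\vee_{\Gamma'}$ with the zero locus of the prime ideal $I_{\mathsf{V}\setminus\mathsf{V}(\Gamma')}$ and passing from maximal disconnected full subgraphs to minimal disconnecting sets by complementation. Your remarks on irredundancy and on the degenerate cases ($\Gamma$ disconnected giving $I_\emptyset=(0)$, and $\Gamma=K_n$ giving the empty intersection $S$, consistent with Remark \ref{obs_kn}) spell out details the paper leaves implicit.
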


In particular,

\begin{cor} \label{cor_neconex}
The graph $\Gamma$ is disconnected if and only if $I_\Gamma = (0)$ if and only if $\cR_\Gamma = V^\vee$.
\end{cor}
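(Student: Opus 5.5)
We deduce Corollary \ref{cor_neconex} from Theorem \ref{thm_structure_ideal_res}, together with the fact that $I_\Gamma$ is the (radical) ideal of $\cR_\Gamma \subseteq V^\vee$. We prove two equivalences: $\Gamma$ disconnected $\iff I_\Gamma=(0)$, and $I_\Gamma=(0)\iff \cR_\Gamma=V^\vee$.

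\emph{Second equivalence.} Since $\bk$ is algebraically closed, hence infinite, a polynomial vanishing on all of $V^\vee$ is zero. So $\cR_\Gamma=V^\vee$ gives $I_\Gamma=(0)$. Conversely, if $I_\Gamma=(0)$, the Nullstellensatz correspondence between radical ideals and closed subsets gives $\cR_\Gamma=V(I_\Gamma)=V^\vee$; here we use that $\cR_\Gamma$ is closed, being a finite union of linear subspaces by (\ref{eq:res_coordinate}).

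\emph{First equivalence, forward direction.} Suppose $\Gamma$ is disconnected. Then $\mathsf{U}=\emptyset$ is a disconnecting set, because the full subgraph on $\mathsf{V}\setminus\emptyset$ is $\Gamma$ itself. Since $\emptyset$ is contained in every subset, it is the unique minimal disconnecting set, so $\cD_\Gamma=\{\emptyset\}$. Theorem \ref{thm_structure_ideal_res} then gives $I_\Gamma=I_\emptyset=(0)$.

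\emph{First equivalence, reverse direction.} Suppose $\Gamma$ is connected, so every $\mathsf{U}\in\cD_\Gamma$ is nonempty. Each $I_{\mathsf{U}}$ is then a nonzero ideal; it contains $x_i$ for any $i\in\mathsf{U}$. Since $S$ is a domain, the product $\prod_{\mathsf{U}\in\cD_\Gamma} x_{i_{\mathsf{U}}}$, with one $i_{\mathsf{U}}\in\mathsf{U}$ chosen for each $\mathsf{U}$, is nonzero and lies in $\bigcap_{\mathsf{U}\in\cD_\Gamma} I_{\mathsf{U}}=I_\Gamma$. Hence $I_\Gamma\neq(0)$.

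\emph{The only delicate point.} We must ensure this product is not an empty product, i.e.\ that $\cD_\Gamma\neq\emptyset$. If $\cD_\Gamma$ were empty, the intersection would be read as $S$, and the product would be the constant $1$, still nonzero. In that case $I_\Gamma=S$ and $\cR_\Gamma=\emptyset$. But $\cR_\Gamma$ always contains $0$, so this cannot happen. More directly, for $n\ge 3$ we may assume $\Gamma$ is not complete, since a complete graph has no disconnecting set at all. If $\Gamma$ is not complete, take two non-adjacent vertices $i,j$; the set $\mathsf{V}\setminus\{i,j\}$ is disconnecting, so $\cD_\Gamma\neq\emptyset$. Either way $I_\Gamma\neq(0)$, which completes the argument.
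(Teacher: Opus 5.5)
Your argument is correct and matches the paper, which states the corollary as an immediate consequence of Theorem \ref{thm_structure_ideal_res}: with $I_\emptyset=(0)$, one has $\cD_\Gamma=\{\emptyset\}$ exactly when $\Gamma$ is disconnected, and $I_\Gamma$ is the ideal of the closed set $\cR_\Gamma$. One slip: $\cD_\Gamma=\emptyset$ does occur, namely for $\Gamma=K_n$ (where the paper takes $I_{K_n}=S$, Remark \ref{obs_kn}), so your claim that this case ``cannot happen'' is false; it is harmless, though, since $I_\Gamma\neq(0)$ there under either convention, as you note.
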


Let us denote by $W_\Gamma$ the corresponding Koszul module. In \cite[Theorem 5.2]{aprodu_higher_2024}, it is shown (even in the more general framework of simplicial complexes) that $I_\Gamma = \ann W_\Gamma$, that is, the annihilator scheme structure of the resonance is reduced. As already mentioned in \cite[Example 5.3]{aprodu_higher_2024}, this is not always the case for $\cR^{\operatorname{Fitt}}_\Gamma$. It is thus natural to ask when is the Fitting ideal $\fitt W_\Gamma$ radical. We answer this question in the following subsection.

\subsection{Fitting scheme structure of a monomial resonance}

Let $K \subseteq \bigwedge^2V$ be a monomial subspace and $\Gamma = (\mathsf{V}, \mathsf{E})$, with $\mathsf{V} = \{1,\ldots,n\}$, the simple and undirected graph determined by the pair $(V,K)$. Let $W_\Gamma$ be the associated Koszul module and $S = \bk[x_1,\ldots,x_n]$ be the polynomial ring corresponding to the monomial basis of $V$. Consider also $\overline{\Gamma} = (\mathsf{V},\overline{\mathsf{E}})$ the complement graph of $\Gamma$. Starting from the presentation
\begin{align*}
W_\Gamma = W(V,K) = \operatorname{coker} \left( \displaystyle \textstyle{\bigwedge^3 V} \otimes S(-1) \xrightarrow{\ \pi\circ\delta_3\ } \displaystyle\frac{\textstyle{\bigwedge^2 V}}{K} \otimes S\right)
\end{align*}
we easily infer that the matrix $\Theta_\Gamma$ of the graded $S$-module $W_\Gamma$ is obtained from the third Koszul matrix $\delta_3$ by removing the lines corresponding to the edges of $\Gamma$. Put it differently, $\Theta_\Gamma$ is a $| \overline{\mathsf{E}} | \times \binom{n}{3}$ matrix with entries $0$ or $\pm x_i$, with $i = \overline{1,n}$, described as follows. If for any missing edge $(i,j)\in \overline{\mathsf{E}}$ and any ordered triplet $(k,l,m)$, we denote by $\theta_{ij, klm}$ the element on row $(i,j)$ and column $(k,l,m)$ of the matrix $\Theta_\Gamma$, then 
\begin{equation} \label{eq:pres_matrix}
\theta_{ij, klm} = \left\{
\begin{array}{ll}
      x_m \ \ \ \text{if} \ (i,j) = (k,l) \\
      -x_l \ \ \text{if} \ (i,j) = (k,m)\\
      x_k \ \ \ \ \text{if} \ (i,j) = (l,m)\\
      0 \ \ \ \ \ \ \text{otherwise}
\end{array} 
\right. 
\end{equation}
Note that $\Theta_\Gamma$ has precisely $n-2$ variables on each row and at most $3$ variables on each column, cf. \cite[Section 8]{aprodu_reduced_2024}. 

\medskip

Now, the $n$--dimensional torus $\mathbb{T} = (\bk^*)^n$ acts on $S^{| \overline{\mathsf{E}}|}$ by:
\begin{align*}
(\lambda_1,\ldots,\lambda_n)\cdot(\ldots,f_{(ij)}(x_1,\ldots,x_n),\ldots) = (\ldots, \lambda_i\lambda_jf_{(ij)}(\lambda_1x_1,\ldots,\lambda_nx_n),\ldots)
\end{align*}
If $c_{ijk}$ is the $(i,j,k)$--column of $\Theta_\Gamma$ then
\begin{align*}
(\lambda_1,\ldots,\lambda_n)\cdot c_{ijk} = \lambda_i\lambda_j\lambda_k c_{ijk}
\end{align*}
Thus, we see that the ideal generated by any minor of $\Theta_\Gamma$ is $\mathbb{T}$--invariant under the usual action of $\mathbb{T}$ on the polynomial ring $S$:
\begin{align*}
(\lambda_1,\ldots,\lambda_n)\cdot f(x_1,\ldots,x_n) = f(\lambda_1x_1,\ldots,\lambda_nx_n)
\end{align*}
Therefore, by \cite[Proposition 2.1]{miller_2005}, every minor of $\Theta_\Gamma$ is a monomial. In particular,

\begin{rem} \label{rem_fitt_monomial}
$\operatorname{Fitt}_0(W_\Gamma)$ is a monomial ideal. Moreover, if $\Gamma$ is connected, the Fitting ideal $\operatorname{Fitt}_0(W_\Gamma)$ is generated by monomials of degree $| \overline{\mathsf{E}} |$.
\end{rem}

Since the radical of $\fitt W_\Gamma$ is $I_\Gamma$, our strategy is to analyze the degree of the monomials generating the ideal $I_\Gamma$. To be more precise, we will compare the lowest degree $d_\Gamma$ of a monomial in $I_\Gamma$ with $| \overline{\mathsf{E}} |$. But first, let us observe a few simple facts:

\begin{rem} \label{obs_kn}
If $\Gamma$ is the complete graph $K_n$, then $\fitt W_\Gamma = I_\Gamma = S$.
\end{rem}

\begin{rem} \label{obs_kn_fara_una}
If $\Gamma$ is obtained from $K_n$ by removing an edge $(i,j)$, then $\fitt W_\Gamma = I_\Gamma$ is the ideal generated by all the variables $x_k$, with  $k\neq i,j$.
\end{rem}

\begin{rem} \label{obs_c4}
If $\Gamma$ is the cycle graph on $4$ vertices $C_4$, then the matrix of $W_\Gamma$ is
\[
\Theta_\Gamma = \kbordermatrix{
    & 123 & 124 & 134 & 234 \\
    13 & -x_2 & 0 & x_4 & 0 \\
    24 & 0 & x_1 & 0 & -x_3
  }
\]
and $\fitt W_\Gamma = I_\Gamma = (x_1x_2,x_1x_4,x_2x_3,x_3x_4) = (x_1,x_3) \cap (x_2,x_4)$. 
\end{rem}

In the sequel, we show that if $\Gamma$ is connected, but different from the graphs described in the previous three remarks, then $\fitt W_\Gamma$ is not radical. For $n = 3$, there is nothing to prove. For $n = 4$, the remaining cases are treated in the examples below.

\begin{exmp} \label{ex:n = 4_tree1}
If $\mathsf{E} = \{(1,2), (2,3), (3,4)\}$, that is, $\Gamma$ is the path graph on $4$ vertices, then 
\[
\fitt W_\Gamma = (x_1x_2x_3, x_2x_3x_4, x_2x_3^2, x_2^2x_3) = (x_2) \cap (x_3) \cap (x_1, x_2^2, x_3^2, x_4),
\]
whereas $I_\Gamma = (x_2 x_3) = (x_2) \cap (x_3)$.
\end{exmp}

\begin{exmp} \label{ex:n = 4_tree2}
If $\mathsf{E} = \{(1,2), (1,3), (1,4)\}$, then
\[
\fitt W_\Gamma = (x_1^3, x_1^2x_2, x_1^2x_3, x_1^2x_4) = (x_1^2) \cap (x_1^3, x_2, x_3, x_4),
\]
whereas $I_\Gamma = (x_1)$.    
\end{exmp}

\begin{exmp}
If $\mathsf{E} = \{(1,2), (1,3), (2,3), (3,4)\}$, then
\[
\fitt W_\Gamma = (x_1x_3, x_2x_3, x_3^2) = (x_1, x_2, x_3^2) \cap (x_3),
\]
whereas $I_\Gamma = (x_3)$.
\end{exmp}

Recall that the degree of a vertex $i$ is the number of edges that are incident to $i$. A vertex of degree 0 is called \emph{isolated} and a vertex of degree $1$ is called \emph{terminal}.

\begin{lem} \label{lem_obs}
\hfill
\begin{itemize}
    \item [(a)] If $\overline{\Gamma}$ has an isolated vertex, then $d_\Gamma = 1$.
    \item [(b)] If $\overline{\Gamma}$ is disconnected, then $d_\Gamma \le 2$.
    \item [(c)] If $\overline{\Gamma}$ has a terminal vertex, then $d_\Gamma \le 2$.
    \item [(d)] If $\overline{\Gamma}$ has a vertex of degree $2$, then $d_\Gamma \le 4$.
\end{itemize}
\end{lem}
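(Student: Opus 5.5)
Throughout, $d_\Gamma$ is the least degree of a monomial in $I_\Gamma$. By Theorem \ref{thm_structure_ideal_res}, $I_\Gamma = \bigcap_{\mathsf{U}\in\cD_\Gamma} I_{\mathsf{U}}$, and each $I_{\mathsf{U}}$ is a monomial prime. So a monomial $x^{\mathsf{A}}=\prod_{i\in\mathsf{A}}x_i$ lies in $I_\Gamma$ iff $\mathsf{A}$ meets every minimal disconnecting set, equivalently every disconnecting set. The plan is therefore, in each case, to exhibit a small vertex set $\mathsf{A}$ hitting every disconnecting set $\mathsf{U}$ of $\Gamma$. Then $d_\Gamma\le|\mathsf{A}|$.

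The basic tool is the following observation. If $\Gamma[\mathsf{V}\setminus\mathsf{U}]$ is disconnected and $i\notin\mathsf{U}$, then some vertex $j\notin\mathsf{U}$ lies in a different component from $i$. Hence $(i,j)\in\overline{\mathsf{E}}$, and also every $\Gamma$-neighbour of $i$ outside $\mathsf{U}$ is in $i$'s component.

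\emph{Case (a).} Let $i$ be isolated in $\overline\Gamma$, so $i$ is adjacent in $\Gamma$ to all other vertices. Then $i$ lies in every disconnecting set: if $i\notin\mathsf{U}$, then $i$ is joined to all remaining vertices and $\Gamma[\mathsf{V}\setminus\mathsf{U}]$ is connected. Take $\mathsf{A}=\{i\}$, giving $d_\Gamma=1$ (note $d_\Gamma\ge1$ since $I_\Gamma\ne S$, as $\emptyset$ contains $0$; if $\cD_\Gamma=\{\emptyset\}$ the claims are about $I_\Gamma=(0)$ and should be read as vacuous or excluded).

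\emph{Case (b).} Let $\overline\Gamma$ be disconnected, and pick $i,j$ in different $\overline\Gamma$-components. Suppose $\mathsf{U}$ avoids both. Every vertex in $i$'s $\overline\Gamma$-component is $\Gamma$-adjacent to every vertex outside that component. So the remaining vertices split into two nonempty parts (each containing $i$ or $j$) that are completely joined in $\Gamma$. This makes $\Gamma[\mathsf{V}\setminus\mathsf{U}]$ connected, a contradiction. Hence $\mathsf{A}=\{i,j\}$ works.

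\emph{Case (c).} Let $i$ have the unique $\overline\Gamma$-neighbour $j$. If $i,j\notin\mathsf{U}$, then $i$ is $\Gamma$-adjacent to every remaining vertex except $j$. So every remaining vertex other than $j$ is in $i$'s component, and $j$ must be the vertex in the other component. But then $j$ is $\overline\Gamma$-adjacent to all remaining vertices. Take $\mathsf{A}=\{i,j\}$: we must also rule out this configuration. Since $i$ is in the complement of $\mathsf{U}$ and $(i,j)\in\overline{\mathsf E}$, this is consistent. So refine: choose $\mathsf{A}=\{i,j\}$ only after checking. If it fails, use instead the fact that $i$ must lie in $\mathsf{U}$ or else $j$ is isolated from $\mathsf{V}\setminus(\mathsf{U}\cup\{j\})$. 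Then $\mathsf{U}\cup\{j\}$ need not be disconnecting, so I would instead take $\mathsf{A}=\{i,j\}$ and argue directly: if $\mathsf{U}$ avoids $\{i,j\}$, then $\Gamma[\mathsf{V}\setminus\mathsf{U}]$ has exactly the components $\{j\}$ and the rest. I expect the real proof to require $\mathsf{A}$ to include a neighbour of $j$, and this is the main obstacle. I would try choosing $\mathsf{A}=\{i,k\}$ with $k$ a $\Gamma$-neighbour of $j$ (one exists unless $j$ is $\overline\Gamma$-adjacent to everything). Then $\mathsf{U}$ avoiding $i,k$ forces $j$ and $k$ to be in the same component, and $k$ to be in $i$'s component, so everything is connected.

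\emph{Case (d).} Let $i$ have $\overline\Gamma$-neighbours $j,j'$. The same reasoning shows that if $\mathsf{U}$ avoids $i$, then the stray component lies inside $\{j,j'\}$. Adding $j,j'$ and one $\Gamma$-neighbour of each (when these exist) to $\mathsf{A}$ gives $|\mathsf{A}|\le 4$. The degenerate subcases, where $j$ or $j'$ has no $\Gamma$-neighbours or is adjacent only to the other, are handled by Case (a)/(b)-type arguments, since $\overline\Gamma$ then has a vertex of full degree or a disconnected structure.
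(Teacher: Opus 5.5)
\textbf{Parts (a)--(c).} Your hitting-set reformulation of Theorem \ref{thm_structure_ideal_res} is the paper's argument in other words. Parts (a) and (b) coincide with the paper's proof. In (a) you, like the paper, really only get $d_\Gamma\le 1$. Your justification of $I_\Gamma\neq S$ is garbled and fails for $\Gamma=K_n$ (Remark \ref{obs_kn}), but only the upper bound is used in Proposition \ref{prop_neredus}. In (c), your final choice $\mathsf A=\{i,k\}$, with $k$ in the set $N_\Gamma(j)$ of $\Gamma$-neighbours of $j$, is correct. It is the paper's argument, since $N_\Gamma(j)$ is the unique minimal disconnecting set avoiding $i$. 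Delete the earlier attempts, though: $\{i,j\}$ really fails. For the path with edges $(1,2),(2,3),(3,4)$, take $i=3$, $j=1$; then $\{2\}$ is disconnecting and misses $\{i,j\}$.

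\textbf{Part (d) has a genuine gap.} You never determine which sets a disconnecting $\mathsf U\not\ni i$ must contain. Since the other components of $\Gamma[\mathsf V\setminus\mathsf U]$ lie in $\{j,j'\}$, the set $\mathsf U$ contains one of:
\begin{itemize}
\item $N_\Gamma(j)$;
\item $N_\Gamma(j')$;
\item $(N_\Gamma(j)\cup N_\Gamma(j'))\setminus\{j,j'\}$, when $(j,j')\in\mathsf E$ and $\{j,j'\}$ is a single component.
\end{itemize}
The third possibility is what your ``degenerate subcases'' conceal, and these do not reduce to (a) or (b). Take the path with edges $(i,a),(a,j),(j,j')$. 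Here $\deg_{\overline\Gamma}(i)=2$ and $N_\Gamma(j')=\{j\}$, yet $\overline\Gamma$ is a path, connected and without isolated vertices. The set $\{a\}$ is disconnecting, with $\{j,j'\}$ as a component of the rest. The choice $\mathsf A=\{i,j,j'\}$ (taking $j'$ as the neighbour of $j$ and $j$ as that of $j'$) misses it.

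The count is also unjustified. $i$ must lie in $\mathsf A$: for $\mathsf E=\{(i,a),(i,b),(j,a),(j',a)\}$ the set $\{i\}$ is disconnecting. So $\{i,j,j',k,k'\}$ has five elements. Moreover $j,j'$ contribute nothing when $(j,j')\notin\mathsf E$.

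\textbf{The repair is the paper's.} The three sets above are the only candidates for minimal disconnecting sets avoiding $i$, since each is itself disconnecting whenever it applies. They are non-empty because $\Gamma$ is connected. Taking $i$ together with one vertex from each gives $|\mathsf A|\le 4$.
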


\begin{proof}
(a) Simply note that the isolated vertex $i$ belongs to any minimal disconnecting set of $\Gamma$ and therefore, $x_i \in I_\Gamma$.\\
(b) Since its complement is disconnected, $\Gamma$ is the join $\Gamma_1 * \Gamma_2$ of two subgraphs $\Gamma_1 = (\mathsf{V_1}, \mathsf{E_1})$ and $\Gamma_2 = (\mathsf{V_2}, \mathsf{E_2})$, that is $\mathsf{V} = \mathsf{V}_1 \sqcup \mathsf{V}_2$ and $\mathsf{E} = \mathsf{E}_1 \cup \mathsf{E}_2 \cup \{(i,j) : i \in \mathsf{V}_1, j \in \mathsf{V}_2\}$. Note that any subgraph of $\Gamma$ containing a vertex $i$ of $\Gamma_1$ and a vertex $j$ of $\Gamma_2$ is connected. Thus, by Theorem \ref{thm_structure_ideal_res}, all the monomials $x_i x_j$, with $i \in \mathsf{V}_1$, $j \in \mathsf{V}_2$ belong to $I_\Gamma$.\\
(c) If $\overline{\Gamma}$ has a terminal vertex $i$, then there is only one minimal disconnecting subset of $\Gamma$ which does not contain $i$. Therefore, using Theorem \ref{thm_structure_ideal_res}, the ideal $I_\Gamma$ contains a monomial of the form $x_i x_j$, for some $j\in \mathsf{V}$.\\
(d) If $\overline{\Gamma}$ has a vertex $i$ connected with only two other vertices, then there are at most $3$ minimal disconnecting subsets of $\Gamma$ which do not contain $i$. By Theorem \ref{thm_structure_ideal_res}, the ideal $I_\Gamma$ contains a monomial of degree $4$, that is $d_\Gamma \le 4$.
\end{proof}

\begin{prop} \label{prop_neredus}
If $n \ge 5$,  $\Gamma$ is connected and $| \overline{\mathsf{E}} | \ge 2$, then $d_\Gamma < | \overline{\mathsf{E}} |$. In particular, the Fitting ideal $\fitt W_\Gamma$ is not radical.
\end{prop}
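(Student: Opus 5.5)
We compare $d_\Gamma$ with $m := |\overline{\mathsf{E}}|$ by a case analysis on the minimum degree $\delta$ of the complement graph $\overline{\Gamma}$, using Lemma \ref{lem_obs}. The ``in particular'' part then follows at once. By Remark \ref{rem_fitt_monomial}, since $\Gamma$ is connected, $\fitt W_\Gamma$ is generated by monomials of degree $m$, so every monomial in it has degree at least $m$. On the other hand, $\sqrt{\fitt W_\Gamma} = I_\Gamma$ contains a monomial of degree $d_\Gamma < m$. Hence $I_\Gamma \not\subseteq \fitt W_\Gamma$, and $\fitt W_\Gamma$ is not radical.

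The small-degree cases come directly from the lemma. If $\delta = 0$, Lemma \ref{lem_obs}(a) gives $d_\Gamma = 1 < 2 \le m$. If $\delta = 1$, Lemma \ref{lem_obs}(c) gives $d_\Gamma \le 2$. We then need $m \ge 3$, so we must exclude $m = 2$. With two missing edges and no isolated vertex in $\overline{\Gamma}$, at most $4$ vertices are covered, which is impossible for $n \ge 5$. If $\delta = 2$, Lemma \ref{lem_obs}(d) gives $d_\Gamma \le 4$, and the sum of degrees gives $m \ge n \ge 5 > 4$.

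The remaining case is $\delta \ge 3$, where $m \ge 3n/2$. Here we need a general bound on $d_\Gamma$ in terms of a vertex $i$ of minimum degree $\delta$ in $\overline{\Gamma}$, generalizing the argument of Lemma \ref{lem_obs}(c),(d). Let $N$ be the set of $\overline{\Gamma}$-neighbours of $i$, so $|N| = \delta$; in $\Gamma$, $i$ is adjacent to every vertex outside $N \cup \{i\}$. Consider a minimal disconnecting set $\mathsf{U}$ of $\Gamma$ with $i \notin \mathsf{U}$. In $\Gamma - \mathsf{U}$, the component of $i$ contains every vertex not in $N \cup \mathsf{U}$. Hence some vertex of $N \setminus \mathsf{U}$ lies in another component, and that vertex is non-adjacent to $i$ and to all other vertices of the component of $i$. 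So every such $\mathsf{U}$ misses at least one vertex $j \in N$.

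From this we build a monomial in $I_\Gamma$. Take $x_i$ times the product of the variables indexed by $\mathsf{V} \setminus (N \cup \{i\})$. This monomial lies in every $I_{\mathsf{U}}$ with $i \in \mathsf{U}$. It also lies in every $I_{\mathsf{U}}$ with $i \notin \mathsf{U}$, provided $\mathsf{U}$ meets $\mathsf{V} \setminus (N \cup \{i\})$. If instead $\mathsf{U} \subseteq N$, then we multiply by one variable from each of the at most $2^{\delta}$ such sets. This crude bound is too weak, so we use a cleaner choice instead.

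The product of all $x_k$ with $k \ne i$ that are $\Gamma$-adjacent to $i$, together with $x_i$, has degree $n - \delta$. We claim it lies in $I_\Gamma$. Indeed, if $\mathsf{U}$ avoids $i$ and all $\Gamma$-neighbours of $i$, the component of $i$ contains all of them. We must then check that this is still compatible with disconnection; this is exactly the step that needs care. The simpler estimate we will actually use is
\[
d_\Gamma \le |\{\text{monomial from the argument of Lemma \ref{lem_obs}(d)}\}| \le \delta + 1 + \#\{\mathsf{U} \in \cD_\Gamma : i \notin \mathsf{U}\},
\]
or, more cleanly, $d_\Gamma \le n - 1$. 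This holds because, for connected $\Gamma$, every minimal disconnecting set misses at least two vertices, so the product of any $n-1$ variables lies in every $I_{\mathsf{U}}$ with $|\mathsf{U}| \ge 1$. Note that $\mathsf{U} = \emptyset$ is not disconnecting since $\Gamma$ is connected. Then $d_\Gamma \le n - 1 < 3n/2 \le m$.

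In fact this last bound settles most cases uniformly: whenever $m \ge n$ we get $d_\Gamma \le n-1 < m$. So the real work is only for $m \le n - 1$, where $\overline{\Gamma}$ must have a vertex of degree at most $1$, and this is handled by (a) and (c) above. The main obstacle is verifying $d_\Gamma \le n - 1$ carefully. A minimal disconnecting set has size at most $n-2$, so each $I_{\mathsf{U}}$ contains the product of any $n - 1$ variables, since such a product involves a variable from $\mathsf{U}$ whenever $\mathsf{U} \ne \emptyset$. We must also treat the edge case $m = n - 1$ with $\delta = 1$, which needs $d_\Gamma \le 2 < n - 1$; this holds since $n \ge 5$.
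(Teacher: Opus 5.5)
There is one false step in your proposal, though it is easy to repair. Your case analysis on the minimum degree $\delta$ of $\overline{\Gamma}$ is correct for $\delta\le 2$, and so is your deduction of non-radicality. The problem is the justification of $d_\Gamma\le n-1$ that you use for $\delta\ge 3$.

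A product of $n-1$ variables omitting $x_k$ does not lie in $I_{\{k\}}=(x_k)$. Moreover, $\{k\}\in\mathcal{D}_\Gamma$ whenever $k$ is a cut vertex of $\Gamma$, and the bound $|\mathsf{U}|\le n-2$ does not help when $|\mathsf{U}|=1$. This happens inside your case. Take $\Gamma=P_6$, the path with edges $(1,2),\dots,(5,6)$. Then $\delta=3$ and $\{2\}\in\mathcal{D}_\Gamma$, but $x_1x_3x_4x_5x_6\notin(x_2)$.

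The bound $d_\Gamma\le n-1$ is still true if you omit the variable of a non-cut vertex, for example a leaf of a spanning tree. But you do not need it. Since $\Gamma$ is connected, every $\mathsf{U}\in\mathcal{D}_\Gamma$ is non-empty, so $x_1\cdots x_n\in I_\Gamma$. Hence $d_\Gamma\le n<3n/2\le|\overline{\mathsf{E}}|$, which is exactly the paper's ``obvious if $|\overline{\mathsf{E}}|>n$'' step.

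Two further points need attention. Your closing remark that $|\overline{\mathsf{E}}|\ge n$ always gives $d_\Gamma\le n-1<|\overline{\mathsf{E}}|$ requires the non-cut-vertex version when $|\overline{\mathsf{E}}|=n$. The abandoned fragments should be deleted. In particular, the claim that $x_i$ times the variables of the $\Gamma$-neighbours of $i$ lies in $I_\Gamma$ is false: in $P_6$ with $i=2$ (a vertex of minimum $\overline{\Gamma}$-degree), $x_1x_2x_3\notin(x_4)$, although $\{4\}\in\mathcal{D}_\Gamma$.

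With that repair, your route is a valid reorganisation of the paper's proof. The paper splits on $|\overline{\mathsf{E}}|$:
\begin{itemize}
\item more than $n$ edges is trivial;
\item exactly $2$ edges forces an isolated vertex;
\item fewer than $n-1$ edges makes $\overline{\Gamma}$ disconnected, handled by Lemma \ref{lem_obs}(b);
\item a connected $\overline{\Gamma}$ with $n-1$ or $n$ edges has a leaf unless it is $C_n$;
\item $C_n$ falls under Lemma \ref{lem_obs}(d).
\end{itemize}
You split on $\delta$ and use $2|\overline{\mathsf{E}}|\ge \delta n$, so you never need Lemma \ref{lem_obs}(b) or any discussion of whether $\overline{\Gamma}$ is connected. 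Your split also explicitly covers a disconnected $\overline{\Gamma}$ with $n-1$ or $n$ edges, which the paper leaves implicit. Both arguments rest on Lemma \ref{lem_obs}(a), (c), (d) together with the trivial bound $d_\Gamma\le n$.
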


\begin{proof}
To begin with, note that the statement is obvious if $| \overline{\mathsf{E}} | > n$. Also, if $| \overline{\mathsf{E}} | = 2$, then $\overline{\Gamma}$ has an isolated vertex, so $d_\Gamma = 1$, by Lemma \ref{lem_obs}. Hence, we may assume that $| \overline{\mathsf{E}} | \in \{3, \ldots, n\}$.

If $| \overline{\mathsf{E}} | < n - 1$, then $\overline{\Gamma}$ is disconnected and hence $d_\Gamma \le 2$, by Lemma \ref{lem_obs}.

If $\overline{\Gamma}$ is connected and has $n - 1$ edges or if $\overline{\Gamma}$ is connected, has $n$ edges and it is not the cycle $C_n$, then $\overline{\Gamma}$ has a terminal vertex. Thus, by Lemma \ref{lem_obs}, $d_\Gamma \le 2$.

For the last case, namely $\overline{\Gamma} = C_n$, note that any vertex of $\overline{\Gamma}$ is connected with precisely two other vertices. Hence, by Lemma \ref{lem_obs}, $d_\Gamma \le 4$. Since in this case $| \overline{\mathsf{E}} | = n \ge 5$, we are done.
\end{proof}

Summarizing, we obtain a precise description of the graphs for that the Fitting scheme structure of the associated resonance is reduced.

\begin{thm} \label{thm_fitting}
The scheme $\cR^{\operatorname{Fitt}}_\Gamma$ is reduced if and only if one of the conditions below is satisfied:
\begin{itemize}
    \item [(i)] $\Gamma$ is disconnected;
    \item [(ii)] $\Gamma$ is the complete graph with $n$ vertices $K_n$;
    \item [(iii)] $\Gamma$ is obtained from $K_n$ by removing an edge;
    \item [(iv)] $\Gamma$ is the cycle graph with $4$ vertices $C_4$.
\end{itemize}
\end{thm}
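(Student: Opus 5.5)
The proof splits into the two directions of the equivalence. Both rest on the observation that $\sqrt{\fitt W_\Gamma} = I_\Gamma$, since both define the support of $W_\Gamma$. So $\cR^{\operatorname{Fitt}}_\Gamma$ is reduced exactly when $\fitt W_\Gamma = I_\Gamma$.

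For the ``if'' direction we check the four cases.
\begin{itemize}
\item[(i)] If $\Gamma$ is disconnected, Corollary \ref{cor_neconex} gives $I_\Gamma = (0)$. Since $\fitt W_\Gamma \subseteq \ann W_\Gamma = I_\Gamma$ by \cite[Theorem 5.2]{aprodu_higher_2024}, we get $\fitt W_\Gamma = (0) = I_\Gamma$. Alternatively, one can note that $\fitt W_\Gamma$ is contained in its radical $I_\Gamma = (0)$.
\item[(ii)] This is Remark \ref{obs_kn}.
\item[(iii)] This is Remark \ref{obs_kn_fara_una}.
\item[(iv)] This is Remark \ref{obs_c4}.
\end{itemize}

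For the ``only if'' direction, assume $\Gamma$ is connected and not one of $K_n$, $K_n$ minus an edge, or $C_4$. Then $|\overline{\mathsf{E}}| \ge 2$, and we must show $\fitt W_\Gamma \neq I_\Gamma$. By Remark \ref{rem_fitt_monomial}, $\fitt W_\Gamma$ is a monomial ideal generated by monomials of degree $|\overline{\mathsf{E}}|$, so every nonzero element of it has degree at least $|\overline{\mathsf{E}}|$. On the other hand, $I_\Gamma$ is a monomial ideal (Theorem \ref{thm_structure_ideal_res}) that contains a monomial of degree $d_\Gamma$. Hence whenever $d_\Gamma < |\overline{\mathsf{E}}|$, that monomial lies in $I_\Gamma \setminus \fitt W_\Gamma$, and the Fitting ideal is not radical.

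It remains to split by $n$.
\begin{itemize}
\item For $n \ge 5$, the conclusion is exactly Proposition \ref{prop_neredus}.
\item For $n = 3$, a connected graph is either $K_3$ or a path with two edges, which is $K_3$ minus an edge. Both are excluded, so there is nothing to prove.
\item For $n = 4$, we enumerate the connected graphs up to isomorphism: the path $P_4$, the star $K_{1,3}$, the triangle with a pendant edge (the paw), $C_4$, $K_4$ minus an edge, and $K_4$. The last three are excluded. The first three are handled by the explicit examples preceding Lemma \ref{lem_obs}, each of which exhibits $\fitt W_\Gamma \neq I_\Gamma$. One can also see this from degrees: in each of these cases $d_\Gamma < |\overline{\mathsf{E}}|$, since $x_2x_3$, $x_1$ and $x_3$ respectively lie in $I_\Gamma$, while $|\overline{\mathsf{E}}| = 3, 3, 2$.
\end{itemize}

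The step most likely to be an obstacle is making sure the enumeration of connected graphs on $4$ vertices is complete. Trees on $4$ vertices are $P_4$ and $K_{1,3}$; graphs with $4$ edges are $C_4$ and the paw; the graph with $5$ edges is $K_4$ minus an edge; and the graph with $6$ edges is $K_4$. This list covers everything, so the two directions together prove the theorem.
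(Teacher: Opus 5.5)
Your proof is correct and takes essentially the same approach as the paper. The ``if'' direction comes from Corollary \ref{cor_neconex} and Remarks \ref{obs_kn}--\ref{obs_c4}; the ``only if'' direction compares $d_\Gamma$ with the generating degree $|\overline{\mathsf{E}}|$ of $\fitt W_\Gamma$, via Proposition \ref{prop_neredus} for $n\ge 5$ and the three $n=4$ examples. Your explicit enumeration of the connected graphs on four vertices, and your degree check in those cases, spell out what the paper leaves implicit.
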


In contrast to the annihilator ideal of $W_\Gamma$, a complete description of the primary decomposition of $\fitt W_\Gamma$ in terms of the combinatorial structure of $\Gamma$ is not yet known. A tractable starting point is the case of trees. Therefore, in what follows, we will consider a tree $T = (\mathsf{V}, \mathsf{E})$ on $n \ge 3$ vertices, that is, a connected graph with exactly $n - 1$ edges. Note that removing a vertex $i \in \mathsf{V}$ from $T$ (and implicitly, all the edges incident to $i$) yields a graph with precisely $\deg_T(i)$ connected components, where $\deg_T(i)$ denotes the degree of the vertex $i$ in $T$. We associate the following monomial to $T$:
\begin{equation} \label{eq:m_T}
m_T = \prod_{i=1}^n x_i^{\deg_T(i) - 1} \in S .
\end{equation}

\begin{notation}
For brevity, we denote by $t_{m} : = \binom{m+1}{2}$ the $m$-th \emph{triangular} number, for any integer $m \ge 0$. With this notation, the complement of $T$ has precisely $t_{n-1} - (n-1) = t_{n-2}$ edges.
\end{notation}
    
\begin{obs}
As the Fitting ideal of a finitely generated $S$-module is invariant under any elementary transformation of its presentation matrix, we shall, by a slight abuse of notation, denote by $\Theta_\Gamma$ any matrix obtained from the presentation matrix (\ref{eq:pres_matrix}) of $W_\Gamma$ by permuting its rows or columns, or by multiplying them by $-1$.
\end{obs}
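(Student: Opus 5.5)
The plan is to work directly with the definition of $\fitt W_\Gamma$ as the ideal generated by the maximal minors of a presentation matrix, here the $|\overline{\mathsf{E}}|\times\binom{n}{3}$ minors of $\Theta_\Gamma$. It would suffice to invoke the general independence of Fitting ideals from the chosen presentation (\cite[\href{https://stacks.math.columbia.edu/tag/07Z6}{Tag 07Z6}]{stacks-project}). I would still give the elementary argument, because the three operations in question (permuting rows, permuting columns, multiplying a row or column by $-1$) act on the set of maximal minors in a completely transparent way.

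First, consider a column operation. Permuting columns only permutes the set of $|\overline{\mathsf{E}}|$-element column subsets. Each maximal minor of the new matrix is, up to the sign of the permutation that reorders the chosen columns, a maximal minor of the old matrix, and conversely. Multiplying a column by $-1$ multiplies exactly those maximal minors that involve that column by $-1$ and leaves the others unchanged.

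Second, row operations are handled the same way. Every maximal minor uses all rows, so permuting rows multiplies every maximal minor by the sign of the permutation. Multiplying a row by $-1$ multiplies every maximal minor by $-1$.

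In all cases the set of maximal minors of the new matrix equals the old set up to signs, which are units in $S$. Hence the two sets generate the same ideal of $S$, so $\fitt W_\Gamma$ is unchanged. Composing finitely many such operations gives the claim for any matrix obtained from (\ref{eq:pres_matrix}) in this way. Equivalently, each operation is multiplication on the left or right by an invertible signed permutation matrix, so the cokernel is unchanged up to isomorphism.

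The only point that needs a little care is the bookkeeping of signs when columns are reordered inside a minor. Since signs are units, this causes no genuine obstacle, and I expect no step of the argument to be difficult.
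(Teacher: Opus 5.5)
Your argument is correct and is essentially the paper's: the paper simply invokes the invariance of the Fitting ideal under elementary transformations of the presentation matrix, and your bookkeeping of how row/column permutations and sign changes act on the maximal minors (or your alternative remark that signed permutation matrices are invertible, so the cokernel is unchanged) just makes that invariance explicit. Your claim that every maximal minor uses all rows tacitly assumes $|\overline{\mathsf{E}}| \le \binom{n}{3}$; this is harmless, because otherwise there are no $|\overline{\mathsf{E}}|\times|\overline{\mathsf{E}}|$ minors and both Fitting ideals are $(0)$.
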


\begin{lem} \label{lem_tree_part1}
Any monomial of the ideal $\fitt W_T$ is divisible by $m_T$.
\end{lem}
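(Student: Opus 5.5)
The plan is to reduce the statement to a combinatorial claim about the terms of the Leibniz expansion of a maximal minor, and then bound that combinatorics with a rank computation done one vertex at a time.

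\emph{Step 1: reduction to Leibniz terms.} Since $T$ is connected, $\Theta_T$ has $t_{n-2}=|\overline{\mathsf{E}}|$ rows, and we may work with maximal minors. By Remark \ref{rem_fitt_monomial} every such minor is a monomial, possibly $0$. Take a nonzero maximal minor $\Delta$ on a column set $\mathsf{C}$. Every term of its Leibniz expansion is a monomial, and the terms sum to the single monomial $\Delta$. Hence some term is a nonzero scalar multiple of $\Delta$.

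Such a term corresponds to an injection $\sigma$ from the missing edges of $T$ to the columns, with $\sigma(j,k)$ a triple of the form $\{j,k,l\}$. By (\ref{eq:pres_matrix}) that term equals $\pm\prod_{(j,k)\in\overline{\mathsf{E}}} x_{l(j,k)}$, where $l(j,k)$ is the third vertex of the triple $\sigma(j,k)$. It therefore suffices to prove the following claim.

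\emph{Claim.} For every vertex $i$ and every such injection $\sigma$, at least $\deg_T(i)-1$ missing edges $(j,k)$ with $j,k\neq i$ are sent to the column $\{j,k,i\}$.

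Each of these edges contributes a factor $x_i$. Since the claim holds for every $i$ simultaneously, it gives divisibility by $x_i^{\deg_T(i)-1}$ for all $i$, hence by $m_T$.

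\emph{Step 2: setting up the claim at a vertex.} Fix $i$, put $d=\deg_T(i)$ and $V'=\langle e_j : j\neq i\rangle$. Let $F=T\setminus\{i\}$, a forest on $n-1$ vertices with $d$ connected components $C_1,\dots,C_d$. Let $N'$ be the number of missing edges of $T$ that avoid $i$; these are exactly the missing edges of $F$. Among these $N'$ rows, the ones with $l(j,k)\neq i$ are sent injectively into triples contained in $V'$. Their entries form exactly the presentation matrix $\Theta_F$ of $W_F=W(V',K')$, where $K'$ is spanned by the edges of $F$.

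A nonzero term of a minor of $\Theta_F$ can involve at most $\operatorname{rank}\Theta_F$ rows, the rank being taken over the fraction field of $\bk[x_j : j\neq i]$. So at most $\operatorname{rank}\Theta_F$ of these $N'$ rows avoid the columns containing $i$. Since $\operatorname{rank}\Theta_F = N' - \operatorname{rank}W_F$, where $\operatorname{rank}W_F$ denotes the generic rank of $W_F$, the claim reduces to showing $\operatorname{rank}W_F\ge d-1$.

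\emph{Step 3: the generic rank of $W_F$ (main step).} Specialize at a general point $a\in V'^\vee$. The image of the presentation map becomes $a\wedge\bigwedge^2V'$, so the fiber $W_F\otimes\bk(a)$ is dual to the space of forms $a\wedge b$ that vanish on the edges of $F$. The condition $(a\wedge b)(e_j,e_k)=a_jb_k-a_kb_j=0$ along each edge forces $b|_{C_s}$ to be proportional to $a|_{C_s}$ on each tree component $C_s$. This uses that $a$ has all coordinates nonzero, which holds at a general point.

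So $b$ ranges over the $d$-dimensional space $\bigl\{\sum_s c_s\, a|_{C_s}\bigr\}$. The map $b\mapsto a\wedge b$ restricted to this space has kernel $\bk a$, so its image has dimension $d-1$. Hence the generic fiber has dimension $d-1$, giving $\operatorname{rank}W_F = d-1$.

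I expect this identification to be the only delicate point. Specifically, one must check that the dual of the specialized cokernel is exactly the space of decomposable forms $a\wedge b$ in $K'^\perp$, that is, that nothing beyond $a\wedge\bigwedge^2V'$ is killed. Once this holds, Steps 1–2 give the claim for every vertex $i$, and hence divisibility of every monomial of $\fitt W_T$ by $m_T$.
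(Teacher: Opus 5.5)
\textbf{There is a genuine gap in Step 2.} You claim that a nonzero term of a minor of $\Theta_F$ involves at most $\operatorname{rank}\Theta_F$ rows. This is false. A nonzero product of entries along a bijection only shows that the support of the submatrix has a perfect matching, which bounds its term rank. It does not show the submatrix is nonsingular.

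For the matrices in this lemma the two notions really differ. Take $n=6$ and $T$ the star with centre $1$.
\begin{itemize}
\item The rows of $\Theta_T$ are the ten pairs in $\{2,\dots,6\}$.
\item The ten columns indexed by triples in $\{2,\dots,6\}$ form the third Koszul matrix of a $5$-dimensional space, which has rank $6$.
\item The pair--triple incidence graph of a $5$-set is $3$-regular bipartite, so it has a perfect matching.
\end{itemize}
That matching gives an injection $\sigma$ whose Leibniz term is nonzero and contains no $x_1$ at all, although $m_T=x_1^4$. So your Claim fails for arbitrary $\sigma$. It holds only for terms that survive in a nonzero minor. Step 1 tells you nothing about the chosen term beyond its monomial. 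In particular it does not let you pass to a \emph{nonzero minor} of $\Theta_F$ on the rows you isolate, and that is exactly what the rank bound requires.

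\textbf{The fix is to work with minors rather than single terms.} This is what the paper does. Expand a maximal minor $\det\Theta'$ by the generalized Laplace expansion along the $N'$ rows avoiding $i$.
\begin{itemize}
\item In a block $\Theta'[I,J]$, a column $\{i,j,k\}$ has at most one nonzero entry in these rows, namely $\pm x_i$ in row $(j,k)$.
\item Hence $\det\Theta'[I,J]=\pm x_i^{|J_0|}\det\Theta_F[I\setminus I_0,J\setminus J_0]$, where $J_0$ is the set of columns of $J$ containing $i$ and $I_0$ is the set of rows they hit.
\item The second factor is an actual square minor of $\Theta_F$ of size $N'-|J_0|$. It vanishes unless $N'-|J_0|\le\operatorname{rank}\Theta_F$.
\end{itemize}
Your Step 3 supplies the rank. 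It is correct: the dual of the generic fibre is $(a\wedge V'^\vee)\cap K'^\perp$, of dimension $d-1$. One small correction in wording: the image of the specialized map is the contraction $\iota_a(\bigwedge^3V')$, whose annihilator is $a\wedge V'^\vee$; it is not "$a\wedge\bigwedge^2V'$". With this rank, $|J_0|\ge d-1$ in every nonzero summand, so $x_i^{d-1}$ divides the minor.

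The paper reaches the same bound differently. It observes that the residual minor lies in $\fitt W_{\Gamma'}$, where $\Gamma'$ is $T\setminus\{1\}$ together with the $|I_0|$ edges indexed by $I_0$. This ideal is nonzero only if $\Gamma'$ is connected. Your generic-rank computation is an equivalent substitute for that connectivity criterion. Once you use Laplace minors instead of Leibniz terms, your argument becomes the paper's proof.
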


\begin{proof}
It is enough to show that any monomial of $\fitt W_T$ is a multiple of $x_1^{\deg_T(1) - 1}$.
We start by writing the matrix $\Theta_T$ as
\[
\Theta_T =
\kbordermatrix{
    & (1,i,j) & & (1,i,'j') & & (i_1,j_1,k_1) \\[2pt]
  (i,j) & \mathbf{I} \cdot x_1 & \vrule & 0 & \vrule & \Theta_{T \setminus \{1\}} \\[2pt]
  \cline{2-6}
  (1,k) & \star & \vrule & \star & \vrule & 0
}
\]
where
\begin{itemize}
    \item [--] $T \setminus \{1\}$ denotes the full subgraph of $T$ on the vertex set $\mathsf{V} \setminus \{1\}$,
    \item [--] $\mathbf{I}$ denotes the identity matrix,
    \item [--] the rows $(i,j)$ and the columns $(1,i,j)$ correspond to the missing edges of $T$ that are not incident to $1$, that is the missing edges of $T \setminus \{1\}$;
    \item [--] the rows $(1,k)$ correspond to the missing edges of $T$ that are incident to $1$,
    \item [--] the columns $(1,i',j')$ correspond to the edges $(i',j')$ of $T \setminus \{1\}$,
    \item [--] the columns $(i_1,j_1,k_1)$ correspond to all the ordered triplets not containing $1$.
\end{itemize}
As already mentioned, $\Theta_T$ has precisely $r := t_{n-2}$ rows and the set $I$ formed by the rows of type $(i,j)$ has cardinality $r': = t_{n-2} - n + 1 + \deg_T(1)$. Now let $\Theta'$ be a maximal square submatrix of $\Theta_T$, formed by $r$ columns, denoted by $c_1, \ldots, c_r$. By the generalized Laplace expansion,
\[
\det \Theta' = \sum_{J} \pm\  \det \Theta'[I, J] \cdot \det \Theta'[I^c, J^c]
\]
where the sum is taken over all subsets $J \subseteq \{c_1, \ldots, c_r\}$ of size $|J| = r'$. The square submatrix $\Theta'[I, J]$ is obtain from $\Theta'$ by selecting the rows indexed by $I$ and columns indexed by $J$, while the complementary submatrix $\Theta'[I^c, J^c]$ is obtained by deleting those rows and columns from $\Theta'$. The $\pm$ sign depends on the choice of $J$, but it is not relevant for our purposes.

Let us now analyze a matrix of type $\Theta'[I,J]$. If $J_0 \subseteq J$ is formed by the columns of type $(1,i,j)$ and $I_0 \subseteq I$ denotes the set of rows $(i,j)$ such that $(1,i,j)$ belongs to $J_0$, then
\[
\det \Theta'[I,J] = x_1^{| I_0 |} \cdot \det \Theta'[I_0^c, J_0^c],
\]
where the matrix $\Theta'[I_0^c, J_0^c]$ is obtained from $\Theta'[I,J]$ by deleting the rows $I_0$ and the columns $J_0$. Remark that $\det \Theta'[I_0^c, J_0^c] \in \fitt W_{\Gamma'}$, where $\Gamma'$ is the graph obtained from $T \setminus \{1\}$ by adding the edges corresponding to the rows of $I_0$. Since $\fitt W_{\Gamma'} \neq (0)$ if and only if $\Gamma'$ is connected, we infer that, for the minor $\det \Theta'[I,J]$ of $\Theta_T$ to be non-zero, $| I_0 |$ has to be at least $\deg_T(1) - 1$ and hence $x_1^{\deg_T(1) - 1}$ divides any monomial in the ideal $\fitt W_T$.
\end{proof}

\begin{lem} \label{lem_tree_part2}
The monomial $x_i^{t_{n-3}} \cdot m_T$ belongs to $\fitt W_T$, for any $i \in \mathsf{V}$.
\end{lem}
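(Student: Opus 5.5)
The plan is to exhibit one maximal square submatrix $\Theta'$ of $\Theta_T$ (of size $r = t_{n-2}$) with $\det \Theta' = \pm x_i^{t_{n-3}} m_T$. First, a degree check. We have $\sum_j(\deg_T(j)-1) = 2(n-1)-n = n-2$, so $\deg m_T = n-2$, and $t_{n-3}+(n-2) = t_{n-2} = |\overline{\mathsf{E}}|$, as required by Remark \ref{rem_fitt_monomial}. Fix $i$ and root $T$ at $i$. For every vertex $k \neq i$, let $p(k)$ be its parent, i.e.\ the neighbour of $k$ on the unique path from $k$ to $i$.

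Split the rows of $\Theta_T$ into two sets. The set $A$ consists of the missing edges $(j,k)$ with $j,k \neq i$. The set $B$ consists of the missing edges $(i,k)$, that is, $k$ at distance $\ge 2$ from $i$. Counting gives
\[
|A| = \tbinom{n-1}{2} - (n-1-\deg_T(i)) = t_{n-3} + \deg_T(i) - 1,
\qquad |B| = n-1-\deg_T(i).
\]
Choose the columns as follows. For each row $(j,k)\in A$, take the column $(i,j,k)$. For each row $(i,k)\in B$, take the column $(i,k,p(k))$. All these columns are distinct, and there are exactly $r$ of them.

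The columns $(i,j,k)$ restricted to the rows of $A$ form $x_i$ times an identity matrix, by (\ref{eq:pres_matrix}). The columns $(i,k,p(k))$ vanish on the rows of $A$, because the only pair in $\{i,k,p(k)\}$ avoiding $i$ is $(k,p(k))$, which is an edge of $T$ and hence not a row. So $\Theta'$ is block triangular, and
\[
\det\Theta' = \pm x_i^{|A|}\det \Theta'[B,B'],
\]
where $B'$ denotes the columns $(i,k,p(k))$.

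In $\Theta'[B,B']$, the column indexed by $k$ has entry $\pm x_{p(k)}$ in row $(i,k)$. Its only other possible nonzero entry is $\pm x_k$ in row $(i,p(k))$, which occurs only when $p(k)$ is itself not adjacent to $i$. Order both the rows and the columns by the depth of $k$. Since $p(k)$ has strictly smaller depth than $k$, the matrix becomes triangular, and
\[
\det\Theta'[B,B'] = \pm\prod_{k:\, d(i,k)\ge 2} x_{p(k)}.
\]
In this product, each vertex $j\neq i$ occurs once for each child of $j$, i.e.\ $\deg_T(j)-1$ times, while $i$ never occurs. Hence the product equals $\prod_{j\neq i}x_j^{\deg_T(j)-1}$. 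Combining this with $x_i^{|A|} = x_i^{t_{n-3}}\,x_i^{\deg_T(i)-1}$ gives $\det\Theta' = \pm x_i^{t_{n-3}} m_T$, which is therefore in $\fitt W_T$.

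The only step needing care is verifying the vanishing and triangularity patterns directly from (\ref{eq:pres_matrix}); in particular, one must confirm that a column $(i,j,k)$ from the first block may have entries in rows of $B$, but this does not matter for block triangularity. The choice of columns is the key idea, and the remaining verifications are routine.
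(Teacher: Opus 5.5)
Your proof is correct, and it takes a more direct route than the paper. The paper argues by induction on $n$, with the case $n=4$ read off from the explicit Examples. It labels $T$ so that $1$ is a leaf adjacent to $2$, and checks that the minor on the $t_{n-2}$ columns containing $1$ equals $\pm x_1^{t_{n-3}} m_T$. For $i\ge 2$ it glues a maximal minor of $\Theta_{T\setminus\{1\}}$, given by the induction hypothesis, to an $(n-2)\times(n-2)$ block on the rows $(1,j)$ with determinant $\pm x_2x_i^{n-3}$, treating $i=2$ and $i\ge 3$ separately.

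Your chosen columns are in fact \emph{all} the triples containing $i$. Indeed, every edge of $T$ avoiding $i$ is $\{k,p(k)\}$ for its endpoint $k$ farther from $i$, so your two families exhaust all pairs $\{j,k\}\subseteq \mathsf{V}\setminus\{i\}$. So your minor is exactly the one the paper evaluates at a leaf, and your argument shows this same minor works at every vertex. Rooting at $i$ makes the parent-edge block triangular. The extra factor $x_i^{\deg_T(i)-1}$ needed when $i$ is not a leaf comes from $|A| = t_{n-3}+\deg_T(i)-1$, that is, from $T\setminus\{i\}$ being a forest with $\deg_T(i)$ components.

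Your approach gives a single explicit minor for each $i$, with no induction, no special labeling and no reliance on computed base cases. The paper's route needs the triangularity computation only in the simplest (leaf) situation, and pays for it with the inductive gluing step and its case split.
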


\begin{proof}
We will proceed by induction on $n$. The case $n = 3$ is trivial and the case $n = 4$ has been verified in Examples \ref{ex:n = 4_tree1} and \ref{ex:n = 4_tree2}. Let us suppose that $n \ge 5$ and the statement is true for $n - 1$. Without loss of generality, we may further assume that $T$ has a particular labeling, as follows. First, we take $1$ as a terminal vertex of $T$, with $(1,2) \in \mathsf{E}$ as the unique edge incident to $1$. Then, for each $j \in \{3, \ldots, n\}$, when listing the edges of $T$ in lexicographic order, the $(j - 1)$-th edge is $(i,j)$, for some $i < j$. The matrix $\Theta_T$ has the following form
\[
\Theta_T =
\kbordermatrix{
    & (1,i,j) & & (1,i,'j') & & (i_1,j_1,k_1) \\[2pt]
  (1,j) & U & \vrule & \star & \vrule & 0 \\[2pt]
  \cline{2-6}
  (i',j') & 0 & \vrule & \mathbf{I} \cdot x_1 & \vrule & \Theta_{T \setminus \{1\}}
}
\]
where
\begin{itemize}
    \item [--] $U$ is an upper triangular matrix of size $n - 2$, 
    \item [--] the rows $(1,j)$ correspond to the missing edges $(1,3), \ldots, (1,n)$ of $T$,
    \item [--] the columns $(1,i,j)$ correspond to the edges $(i,j)$ of $T$ that are not incident to $1$,
    \item [--] the rows $(i',j')$ and the columns $(1,i',j')$ correspond to the missing edges $(i',j')$ of $T$ that are not incident to $1$,
    \item [--] the columns $(i_1,j_1,k_1)$ correspond to the all ordered triplets not containing $1$.
\end{itemize}
Note that, for any $j\in \{3, \ldots, n\}$, the entry on the main diagonal of $U$ corresponding to the intersection of the row $(1,j)$ and the column $(1,i,j)$ is $-x_i$. Thus, the minor corresponding to the first $t_{n-2}$ columns of $\Theta_T$ is $ \pm x_1^{t_{n-3}} \cdot m_T$.

It remains to show that $x_i^{t_{n-3}} \cdot m_T \in \fitt W_T$, for $i \ge 2$. Since $1$ is a terminal vertex of $T$, we may apply the induction hypothesis to the tree $T': = T \setminus \{1\}$ to produce, for each $i \ge 2$, a maximal square submatrix $\Theta'_i$ of the $\Theta_{T'}$ such that
\[
\det \Theta'_i = \alpha_i \cdot x_i^{t_{n-4}} \cdot m_{T'},
\]
where $\alpha_i$ is some non-zero constant. Note that $m_T = x_2 \cdot m_{T'}$. Considering the shape of $\Theta_T$, it suffices to form, for each $i \ge 2$, a square submatrix $\Theta_i$ of size $n - 2$ with the first $n - 2$ rows and $n - 2$ of the first $t_{n-2}$ columns of $\Theta_T$, such that
\[
\det \Theta_i = \pm x_2 x_i^{n-3}.
\]
For $i = 2$, we form $\Theta_2$ by choosing the columns $(1,2,j)$, with $j\in \{3, \ldots, n\}$. Each row and each column of $\Theta_2$ has precisely one non-zero entry: $-x_2$. For  $i \ge 3$, we form $\Theta_i$ by choosing the columns $(1,2,i)$ and $(1,i,j)$, with $j\neq 1,2,i$. After expanding with respect to the column $(1,2,i)$, we will get a matrix such that the only non-zero entry on the column $(1,i,j)$ is $\pm x_i$. The conclusion follows.
\end{proof}

As a consequence of Lemmas \ref{lem_tree_part1} and \ref{lem_tree_part2}, we obtain the following result concerning the primary decomposition of $\fitt W_T$.

\begin{thm} \label{thm_primary_decomp_trees}
Let $T$ be a tree on $n \ge 4$ vertices and $\cD_T$ the set of vertices of $T$ of degree at least $2$. The ideal $\fitt W_T$ has the following primary decomposition
\[
\fitt W_T = \bigcap_{i \in \cD_T} (x_i)^{\deg_T(i)-1} \cap J_T
\]
where $J_T$ is a non-radical monomial ideal, whose radical equals $\mathfrak{m}:=(x_1, \ldots, x_n)$.
\end{thm}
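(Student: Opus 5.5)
The plan is to work entirely with monomial ideals and take the two lemmas as the two halves of the decomposition. By Remark \ref{rem_fitt_monomial}, $\fitt W_T$ is a monomial ideal. Lemma \ref{lem_tree_part1} gives the inclusion $\fitt W_T \subseteq (m_T)$. For a monomial ideal, the principal ideal generated by a monomial decomposes as
\[
(m_T) = \bigcap_{i \in \cD_T} (x_i^{\deg_T(i)-1}),
\]
where each $(x_i^{\deg_T(i)-1})$ is $(x_i)$-primary. Only the vertices of degree at least $2$ contribute, since terminal vertices have exponent $0$. The theorem then reduces to writing $\fitt W_T = (m_T) \cap J_T$ with $J_T$ being $\mathfrak m$-primary.

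Step one is to produce $J_T$. Since $\fitt W_T \subseteq (m_T)$ and both ideals are monomial, I would write $\fitt W_T = m_T \cdot L$, where $L = (\fitt W_T : m_T)$ is a monomial ideal. Lemma \ref{lem_tree_part2} gives $x_i^{t_{n-3}} \in L$ for every $i$, so $\sqrt{L} = \mathfrak m$ and $L$ is $\mathfrak m$-primary. The goal is to turn the product $m_T L$ into an intersection. For this I would take
\[
J_T := L + (m_T) \qquad\text{or, more safely,}\qquad J_T := \fitt W_T + \mathfrak m^N
\]
for $N$ large, for instance $N \ge \deg m_T + n\,t_{n-3}$. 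In both cases $J_T$ is monomial and $\mathfrak m$-primary, because it contains powers of all the variables.

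The key check is the equality
\[
\fitt W_T = (m_T) \cap \bigl(\fitt W_T + \mathfrak m^N\bigr),
\]
and this is the main obstacle. The inclusion $\subseteq$ is clear. For $\supseteq$, take a monomial $u$ in the right-hand side that is not in $\fitt W_T$. Then $u$ is divisible by $m_T$ and $u \in \mathfrak m^N$, so $u = m_T v$ with $\deg v$ large. By pigeonhole, some $x_i$ divides $v$ with exponent at least $t_{n-3}$. Hence $u$ is a multiple of $x_i^{t_{n-3}} m_T$, which lies in $\fitt W_T$ by Lemma \ref{lem_tree_part2}, a contradiction. This shows the decomposition holds for $N = \deg m_T + n(t_{n-3}-1)+1$. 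The components are irredundant: the $(x_i)$-primary parts cannot be dropped, because their exponents are forced by Lemma \ref{lem_tree_part1} together with the fact that $\sqrt{\fitt W_T} = I_T = \bigcap_{i\in\cD_T}(x_i)$ (Theorem \ref{thm_structure_ideal_res}), and $\mathfrak m$ is an embedded prime.

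Finally, $J_T$ must be shown to be non-radical. Its radical is $\mathfrak m$, so it would be radical only if $J_T = \mathfrak m$. That would force $\fitt W_T$ to equal $(m_T) \cap \mathfrak m$, and in particular $m_T x_i \in \fitt W_T$ for all $i$. I would rule this out by degree. By Remark \ref{rem_fitt_monomial}, every generator of $\fitt W_T$ has degree $t_{n-2}$, whereas
\[
\deg(m_T x_i) = (n-2) + 1 = n-1 < t_{n-2} \quad \text{for } n \ge 4.
\]
Hence $J_T \neq \mathfrak m$ (one can always choose $J_T$ so that it contains $\fitt W_T$ but not $\mathfrak m$; concretely, $x_i \notin J_T$ once $N>1$). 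Examples \ref{ex:n = 4_tree1} and \ref{ex:n = 4_tree2} confirm this shape of the decomposition.
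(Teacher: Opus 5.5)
Your argument is correct and follows the paper's route. The paper only says the decomposition is a consequence of Lemmas \ref{lem_tree_part1} and \ref{lem_tree_part2}, and you supply the omitted details: $\fitt W_T\subseteq (m_T)=\bigcap_{i\in\cD_T}(x_i^{\deg_T(i)-1})$, and the choice $J_T=\fitt W_T+\mathfrak m^N$, justified by the pigeonhole step together with $x_i^{t_{n-3}}m_T\in\fitt W_T$.

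There are two small slips to fix.

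First, the alternative candidate $J_T=L+(m_T)$ does not work. It contains $m_T$, so $(m_T)\cap J_T=(m_T)\neq \fitt W_T$; only your second choice is valid.

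Second, the inequality $n-1<t_{n-2}$ is false for $n=4$, where both sides equal $3$. In fact $\fitt W_T=m_T\mathfrak m$ for both trees on $4$ vertices, so $m_T x_i\in\fitt W_T$ there. The correct observation is that $(m_T)\cap\mathfrak m=(m_T)$. Hence $J_T=\mathfrak m$ would force $m_T\in\fitt W_T$, which is impossible because $\deg m_T=n-2<t_{n-2}$ for $n\ge 4$. Alternatively, simply rely on your parenthetical observation that $x_i\notin J_T$.
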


% \begin{cor}
% The path graph have reduced projectivized resonance, when endowed with the Fitting structure.
% \end{cor}

Guided by the case of trees, we propose the following description of the minimal components of $\fitt W_\Gamma$, for an arbitrary connected graph $\Gamma$, cf. Theorem \ref{thm_structure_ideal_res}.

\begin{conj} \label{conj_minimal components}
The minimal components of $\fitt W_\Gamma$ are $I_\mathsf{U}^{d_\mathsf{U} - 1}$, where $\mathsf{U} \in \cD_\Gamma$ and $d_{\mathsf{U}}$ is the number of connected components of $\Gamma \setminus \mathsf{U}$.
\end{conj}

We verified this conjecture computationally using \emph{Macaulay2} \cite{M2} for small values of $n$. The computations also suggest that, in general, there may exist several embedded components, rather than a single one. Determining the precise nature of these embedded components in terms of the combinatorics of the graph remains an open problem. An example exhibiting $3$ embedded components is given below.

\begin{exmp} \label{ex:3embedded}
If $\mathsf{V} = \{1, \ldots, 5\}$ and $\mathsf{E} = \{(1,2), (1,3), (1,4), (1,5), (2,3), (3,4), (4,5) \}$, then

\noindent
\begin{tabular}{@{}c@{\hspace{1cm}}c@{}}
  \raisebox{-0.5\height}{%
    \begin{tikzpicture}[scale = 0.7,
      every node/.style = {circle, draw, inner sep = 1pt, minimum size = 4mm, font = \small}]
      \foreach \i in {1,...,5} {
        \node (\i) at (90 - 72*\i:0.9cm) {\i};
      }
      \draw (1)--(2);
      \draw (1)--(3);
      \draw (1)--(4);
      \draw (1)--(5);
      \draw (2)--(3);
      \draw (3)--(4);
      \draw (4)--(5);
      \node[draw = none, below = 6pt, font = \normalsize] at (270:0.9cm) {$\Gamma$};
    \end{tikzpicture}%
  }
  &
  $\vcenter{
    \hbox{
      \begin{minipage}{0.7\textwidth}
        \[
        \begin{aligned}
          \fitt W_\Gamma = {} &(x_1, x_3) \cap (x_1, x_4) \cap (x_1^2, x_2, x_3, x_4) \cap (x_1^2, x_3, x_4, x_5) \\
                             &\cap (x_1^3, x_1^2 x_3, x_1^2 x_4, x_1 x_3 x_4, x_2, x_3^2, x_4^2, x_5)
        \end{aligned}
        \]
      \end{minipage}
    }
  }$
\end{tabular}
\end{exmp}

Recall that a \emph{spanning tree} of the graph $\Gamma$ is a subgraph of $\Gamma$ on the full vertex set $\mathsf{V}$ that is a tree. Let us denote by $\cT(\Gamma)$ the set of all spanning trees of $\Gamma$. Define
\[
I_{\cT(\Gamma)} = \left(m_T : T \in \cT(\Gamma)\right)
\]
the ideal generated by the monomials $m_T$, see (\ref{eq:m_T}), associated to all spanning trees $T$ of the graph $\Gamma$.

Provided that $| \overline{\mathsf{E}} | \ge n - 2$, one can strengthen Conjecture \ref{conj_minimal components} to

\begin{conj} \label{conj_fitt_ideal}
If $\Gamma = (\mathsf{V}, \mathsf{E})$ is a connected graph such that $| \overline{\mathsf{E}} | \ge n - 2$, then
\[
\fitt W_\Gamma = I_{\cT(\Gamma)} \cdot \mathfrak{m}^{| \overline{\mathsf{E}} | - n + 2}
\]
where $\mathfrak{m} = (x_1, \ldots, x_n)$.
\end{conj}

\begin{prop} \label{strong_weak_conj}
If $\Gamma = (\mathsf{V}, \mathsf{E})$ is a connected graph such that $| \overline{\mathsf{E}} | \ge n - 2$, then Conjecture \ref{conj_fitt_ideal} implies Conjecture \ref{conj_minimal components}.
\end{prop}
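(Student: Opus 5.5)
The plan is to compute, for each minimal prime of $\fitt W_\Gamma$, the corresponding minimal primary component by localizing. We do this directly from the formula $\fitt W_\Gamma = I_{\cT(\Gamma)}\cdot \mathfrak{m}^{k}$ with $k = |\overline{\mathsf{E}}| - n + 2 \ge 0$, which we assume throughout as the hypothesis of Conjecture \ref{conj_fitt_ideal}.

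\emph{Minimal primes and localization.} Since $\sqrt{\fitt W_\Gamma} = I_\Gamma$, Theorem \ref{thm_structure_ideal_res} shows that the minimal primes are exactly the $I_{\mathsf{U}}$ with $\mathsf{U}\in\cD_\Gamma$. For $\Gamma$ connected, each such $\mathsf{U}$ is a proper subset of $\mathsf{V}$, because $\Gamma\setminus\mathsf{U}$ is disconnected and hence has at least two vertices. Therefore $\mathfrak{m}\not\subseteq I_{\mathsf{U}}$, and $\mathfrak{m}$ becomes the unit ideal in $S_{I_{\mathsf{U}}}$. The $I_{\mathsf{U}}$-primary component is then $I_{\cT(\Gamma)}S_{I_{\mathsf{U}}}\cap S$. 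For a monomial ideal and a monomial prime $I_{\mathsf{U}}$, this contraction is generated by the monomials obtained from the generators by setting $x_j=1$ for $j\notin\mathsf{U}$. So the component is generated by
\[
\textstyle\prod_{i\in\mathsf{U}} x_i^{\deg_T(i)-1}, \qquad T\in\cT(\Gamma).
\]
It remains to show that this ideal equals $I_{\mathsf{U}}^{d_{\mathsf{U}}-1}$; that ideal is indeed $I_{\mathsf{U}}$-primary.

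\emph{Inclusion $\subseteq$ (degree bound).} Let $T$ be a spanning tree. There are no edges of $\Gamma$ between distinct components of $\Gamma\setminus\mathsf{U}$, so $T$ has at most $(n-|\mathsf{U}|)-d_{\mathsf{U}}$ edges with both ends outside $\mathsf{U}$. Hence at least $|\mathsf{U}|+d_{\mathsf{U}}-1$ of its $n-1$ edges meet $\mathsf{U}$. This gives $\sum_{i\in\mathsf{U}}\deg_T(i)\ge |\mathsf{U}|+d_{\mathsf{U}}-1$, so each generator above has degree at least $d_{\mathsf{U}}-1$ and lies in $I_{\mathsf{U}}^{d_{\mathsf{U}}-1}$.

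\emph{Inclusion $\supseteq$ (construction).} This is the main step. Fix exponents $a_u\ge 0$ for $u\in\mathsf{U}$ with $\sum a_u = d_{\mathsf{U}}-1$. We must produce a spanning tree $T$ with $\deg_T(u)=a_u+1$ for every $u\in\mathsf{U}$. The key input is the minimality of $\mathsf{U}$: each $u\in\mathsf{U}$ is adjacent to every component $C_1,\dots,C_{d}$ of $\Gamma\setminus\mathsf{U}$. Otherwise $\mathsf{U}\setminus\{u\}$ would still be disconnecting, since $u$ together with the components it touches would miss some $C_j$.

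Build $T$ as follows. Take a spanning tree of each $C_j$. Join every $u\in\mathsf{U}$ by one edge to $C_1$. Then partition $\{C_2,\dots,C_d\}$ into blocks of sizes $a_u$, and join $u$ by one edge to each component in its block. The resulting graph is connected and has
\[
\textstyle\sum_j(|C_j|-1)+|\mathsf{U}|+(d-1) = n-1
\]
edges, so it is a spanning tree. In it, $\deg_T(u)=1+a_u$, as required.

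Hence every monomial of degree $d_{\mathsf{U}}-1$ in the variables $x_u$, $u\in\mathsf{U}$, appears among the generators, and the minimal component at $I_{\mathsf{U}}$ is exactly $I_{\mathsf{U}}^{d_{\mathsf{U}}-1}$. This is Conjecture \ref{conj_minimal components}.
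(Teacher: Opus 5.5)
Your proposal is correct and follows essentially the same approach as the paper. The paper's proof rests on the same two facts: the lower bound $\sum_{i\in\mathsf{U}}\deg_T(i)\ge |\mathsf{U}|+d_{\mathsf{U}}-1$, and the existence of a spanning tree realizing any admissible degree distribution on $\mathsf{U}$. It simply declares both immediate, whereas you prove them (via the forest edge count and the observation that minimality of $\mathsf{U}$ forces each $u\in\mathsf{U}$ to be adjacent to every component), and you also spell out the localization at $I_{\mathsf{U}}$ that makes the factor $\mathfrak{m}^{k}$ a unit.
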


\begin{proof}
Let $\mathsf{U} = \{i_1, \ldots, i_p\}$ be a minimal disconnecting set of $\Gamma$ and $d_{\mathsf{U}} \ge 2$ the number of connected components of $\Gamma \setminus \mathsf{U}$. The following two assertions are immediate.
\begin{itemize}
    \item [(i)] For any $T \in \cT(\Gamma)$ one has $\sum_{k = 1}^p \deg_T(i_k) \ge p + d_{\mathsf{U}} - 1$.
    \item [(ii)] For any positive integers $a_1, \ldots, a_p$ such that $\sum_{k = 1}^p a_k = p + d_{\mathsf{U}} - 1$, there exists a tree $T_0 \in \cT(\Gamma)$ such that $\deg_{T_0}(i_k) = a_k$, for all $k \in \{1, \ldots, p\}$.
\end{itemize}
In the light of these facts, we infer that any generator of $I_{\cT(\Gamma)}$ is a multiple of a degree $d_{\mathsf{U}} - 1$ monomial in the variables $x_{i_1}, \ldots, x_{i_p}$ and any monomial of degree $d_{\mathsf{U}} - 1$ in the variables $x_{i_1}, \ldots, x_{i_p}$ divides one of the generators of $I_{\cT(\Gamma)}$ and, moreover, their quotient is supported on the set of variables $\{x_h : h \in \mathsf{V} \setminus \mathsf{U}\}$. Thus, if Conjecture \ref{conj_fitt_ideal} holds, the minimal component of $\fitt W_\Gamma$ corresponding to the minimal prime $I_{\mathsf{U}} = (x_{i_1}, \ldots, x_{i_p})$ is the one described in Conjecture \ref{conj_minimal components}, namely $I_{\mathsf{U}}^{d_{\mathsf{U}} - 1}$.
\end{proof}

A \emph{Macaulay2} script that generates an arbitrary connected graph with $n$ vertices and $m$ edges, and verifies Conjecture \ref{conj_fitt_ideal} is provided below. Due to computational constraints, the script is practical only if $n \le 6$.

\begin{lstlisting}[caption={A \emph{Macaulay2} script for Conjecture~\ref{conj_fitt_ideal}}]
loadPackage "EdgeIdeals"
loadPackage "ThinSincereQuivers"

n = 5; -- #vertices
m = 6; -- #edges
S = QQ[x_1..x_n];
G = randomGraph(S,m);
while isConnected G == 0 do{
    G = randomGraph(S, m);
};
A = adjacencyMatrix G;
c = 0;
L = {};
Q0 = {};

for j from 1 to n-1 do{
    for i from 0 to j-1 do{
        if A_(i,j) == 1 then{
            L = append(L,c);
            Q0 = append(Q0,{i,j});
        };
        c = c + 1
    }
}
Q0 = sort Q0;
Q = toricQuiver(Q0);
Arb = allSpanningTrees(Q);

C = koszul(matrix{{x_1..x_n}});
delta3 = C.dd_3;
M = submatrix'(delta3,L,);
W = coker M
Fitt = fittingIdeal(0,W)

p = 1;
for i from 1 to n do{
    p = p*x_i;
}
maxId = ideal(x_1..x_n);
ArbId = ();

for i from 0 to length(Arb)-1 do{
    T = Arb_i;
    mon = 1;
    for k from 0 to n-2 do{
        E = Q0_T_k;
        mon = mon*x_(E_0+1)*x_(E_1+1);
    };
    mon = mon//p;
    ArbId = ideal(ArbId, mon);
}
if numRows M >= n-2 then 
    Fitt == ArbId*maxId^(numRows M - n + 2)
\end{lstlisting}

\subsection{Non-monomial subspaces with monomial resonance}

In the sequel, $\Gamma = (\mathsf{V}, \mathsf{E})$ will be a connected simple graph with $\mathsf{V} = \{1, \ldots, n\}$ and $n \ge 4$. We will denote the resonance variety and the Koszul module associated with $\Gamma$ by $\cR_\Gamma$ and $W_\Gamma$, respectively. Let us also consider $\{e_1, \ldots, e_n \}$ the basis of $V$ corresponding to $\Gamma$ and $\{f_1, \ldots, f_n \}$ its dual basis in $V^\vee$.

It is natural to ask whether the resonance can distinguish between monomial and non-monomial subspaces, or, put it differently, if one can recover the graph $\Gamma$ starting from the resonance. It is the aim of this subsection to partially answer this question. We first outline some methods for constructing non-monomial subspaces that have a monomial resonance.

\begin{prop} \label{prop_triangle}
If the graph $\Gamma$ contains a triangle (i.e. a cycle on $3$ vertices), there exists a non-monomial subspace $K$ such that $\cR_\Gamma = \cR(V,K)$.
\end{prop}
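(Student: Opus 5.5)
The plan is to build $K$ by a small perturbation of the monomial subspace $K_\Gamma$, localized on the triangle. Label the triangle so that $(1,2),(1,3),(2,3)\in\mathsf{E}$, and write $U=\langle e_1,e_2,e_3\rangle$, so that $\bigwedge^2U\subseteq K_\Gamma$.

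\textbf{Construction.} I would take
\[
K=\bigl\langle e_i\wedge e_j : (i,j)\in\mathsf{E},\ \{i,j\}\not\subseteq\{1,2,3\}\bigr\rangle+\bigl\langle e_1\wedge e_2,\ e_1\wedge e_3,\ e_2\wedge e_3+\omega\bigr\rangle .
\]
Here $\omega\neq 0$ is a monomial $2$-form supported on a missing edge at one of the triangle vertices, for instance $\omega=e_1\wedge e_k$ with $(1,k)\in\overline{\mathsf{E}}$. If no such missing edge exists at any triangle vertex, the triangle vertices are adjacent to everything; then I would instead work with the complete graph on the neighbourhood, or pick $\omega$ supported on a missing edge elsewhere. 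The parameters should be fixed only after the resonance check below shows which choice keeps the set of isotropic planes unchanged.

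\textbf{Resonance comparison.} By definition, $a\in\cR(V,K)$ if and only if $a$ lies in a $2$-plane $P=\langle a,b\rangle\subseteq V^\vee$ with $a\wedge b\in K^\perp$, i.e.\ $P$ is isotropic for every element of $K$. I would first use \eqref{eq:res_coordinate}, or equivalently Theorem~\ref{thm_structure_ideal_res}, to describe $\cR_\Gamma$. The key point is that, because $1,2,3$ are pairwise adjacent, every maximally disconnected full subgraph contains at most one of the vertices $1,2,3$. Two inclusions then have to be checked.

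\begin{itemize}
\item[(1)] $\cR_\Gamma\subseteq\cR(V,K)$. Each coordinate subspace $V^\vee_{\Gamma'}$ contains a decomposable isotropic pair $f_i\wedge f_j$, or a pair of general forms on it, that still annihilates the modified generator $e_2\wedge e_3+\omega$. This holds because $\Gamma'$ meets $\{1,2,3\}$ in at most one vertex, and $\omega$ should be chosen compatibly with this.
\item[(2)] $\cR(V,K)\subseteq\cR_\Gamma$. Given an isotropic plane $P$ for $K$, I would evaluate the pairings on $e_1\wedge e_2$, $e_1\wedge e_3$ and $e_2\wedge e_3+\omega$. The first two force the restriction of $P$ to $\langle f_1,f_2,f_3\rangle$ to be degenerate in a controlled way. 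I would then argue that the $\omega$-term either vanishes on $P$ or reduces to the monomial condition. This reduces to the same case analysis as for $K_\Gamma$.
\end{itemize}

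\textbf{Non-monomiality.} I would show that no basis of $V$ makes $K$ monomial by an invariant that distinguishes it from every $K_{\Gamma''}$ with the same dimension. The natural candidate is the locus of decomposable elements of $K$, or the rank stratification of $K$ viewed as a space of skew forms on $V^\vee$. Since $e_2\wedge e_3+\omega$ has rank $4$ for a good choice of $\omega$, counting the linear spaces of decomposables in $K$ should detect that $K$ is not spanned by decomposables of a common basis.

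\textbf{Main obstacle.} I expect the hardest step to be inclusion (2): ensuring the perturbation $\omega$ does not create new isotropic planes, and so no new resonance. If the explicit choice fails, my fallback is a different perturbation. I would replace the three triangle generators by $\bigwedge^2 U'$ for a $3$-space $U'$ that is a small deformation of $U$ in the directions of $e_k$, where $k$ ranges over common neighbours. I would then verify with Theorem~\ref{thm_structure_ideal_res} that the minimal primes are unchanged.
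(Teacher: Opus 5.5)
Your explicit construction does not preserve the resonance, so the proposal has a genuine gap. The triangle works against you here. If $a\wedge b\in K^\perp$, then $a_1b_2=a_2b_1$ and $a_1b_3=a_3b_1$. If in addition $a_2b_3\neq a_3b_2$, then $(a_1,b_1)$ lies on both of the distinct lines spanned by $(a_2,b_2)$ and $(a_3,b_3)$, so $a_1=b_1=0$. Then $a_1b_k-a_kb_1=0$, and the modified condition $(a_2b_3-a_3b_2)+(a_1b_k-a_kb_1)=0$ fails. Hence every resonant pair of your $K$ satisfies both $a_2b_3=a_3b_2$ and $a_1b_k=a_kb_1$. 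That is, $\cR(V,K)=\cR_{\Gamma'}$, where $\Gamma'$ is $\Gamma$ with the edge $(1,k)$ added: perturbing along a missing edge just adds that edge, and the resonance can strictly shrink.

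Concretely, take $\mathsf{E}=\{(1,2),(1,3),(2,3),(3,4)\}$ and $\omega=e_1\wedge e_4$, so $K=\langle e_1\wedge e_2,e_1\wedge e_3,e_2\wedge e_3+e_1\wedge e_4,e_3\wedge e_4\rangle$. For $a=f_1$ the generators force $b_2=b_3=b_4=0$. So $f_1\notin\cR(V,K)=\operatorname{Span}\{f_2,f_4\}$, whereas $\cR_\Gamma=\operatorname{Span}\{f_1,f_2,f_4\}$.

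The same example refutes your ``key point''. The full subgraph on $\{1,2,4\}$ is maximally disconnected and contains both $1$ and $2$. Only the weaker statement holds: the triangle vertices outside a disconnecting set lie in a single component. In addition, your recipe is undefined for $\Gamma=K_n$, where there are no missing edges. The fallback via a deformed $3$-space $U'$ is too unspecified to check either the resonance or non-monomiality.

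The missing idea is to perturb along an existing edge while dropping a generator, so that $K$ is a proper subspace of $K_\Gamma$. By connectivity, take $(3,4)\in\mathsf{E}$. Let $K$ be spanned by $e_1\wedge e_2+e_3\wedge e_4$ together with the $e_i\wedge e_j$ for $(i,j)\in\mathsf{E}\setminus\{(1,2),(3,4)\}$; this is what the paper does.

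Then $K^\perp\supseteq K_\Gamma^\perp$, which gives $\cR_\Gamma\subseteq\cR(V,K)$ for free. The same rigidity argument now helps. If $a_1b_2\neq a_2b_1$, the conditions for $(1,3)$ and $(2,3)$ force $a_3=b_3=0$. Hence $a_3b_4-a_4b_3=0$, while $a_1b_2-a_2b_1\neq 0$, contradicting the condition coming from $e_1\wedge e_2+e_3\wedge e_4$. So all edge conditions of $\Gamma$ hold.

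Your decomposability idea then does prove non-monomiality. Write $\phi=\gamma(f_1\wedge f_2-f_3\wedge f_4)+\sum_{(i,j)\notin\mathsf{E}}\beta_{ij}f_i\wedge f_j\in K^\perp$. The coefficient of $f_1\wedge f_2\wedge f_3\wedge f_4$ in $\phi\wedge\phi$ is $-2\gamma^2$, because $(1,3),(2,3)\in\mathsf{E}$ contribute no competing terms. So the decomposable elements of $K^\perp$ lie in the hyperplane $\gamma=0$, and $K^\perp$ is not spanned by decomposables.
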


\begin{proof}
Without loss of generality, we assume that the triangle of $\Gamma$ is formed by the vertices $\{1,2,3\}$, that is $(1,2), (1,3), (2,3) \in \mathsf{E}$. Since $\Gamma$ is connected, we may also assume that $(3,4) \in \mathsf{E}$. 

Let us define $K$ to be the subspace of $\bigwedge^2V$ generated by $e_1 \wedge e_2 + e_3 \wedge e_4$ and all the totally decomposable vectors $e_i \wedge e_j$ such that $(i,j) \in \mathsf{E} \setminus \{(1,2), (3,4) \}$. Note that $K^\perp$ is generated by $f_1 \wedge f_2 - f_3 \wedge f_4$ and all the totally decomposable vectors $f_i \wedge f_j$ such that $(i,j) \notin \mathsf{E}$. It is clear that $K$ is not a monomial subspace. In what follows, we prove that the resonance of $K$ coincides (as a variety) with the resonance of $\Gamma$. 

By definition, a non-zero vector $a  = \sum_{i=1}^n a_i f_i \in V^\vee$ belongs to $\cR_\Gamma$ if and only if there exists a non-zero vector $b = \sum_{i=1}^n b_i f_i$, non-proportional to $a$, such that
\begin{equation} \label{eq:graph}
    a_i b_j = a_j b_i, \ \forall \ (i,j) \in \mathsf{E}
\end{equation}
Now, a non-zero vector $a  = \sum_{i=1}^n a_i f_i \in V^\vee$ belongs to $\cR(V,K)$ if and only if there exists a non-zero vector $b = \sum_{i=1}^n b_i f_i$, non-proportional to $a$, such that
\begin{equation} \label{eq:non-monomial}
    \begin{cases}
        a_1 b_2 - a_2 b_1 = a_3 b_4 - a_4 b_3 \\
        a_i b_j = a_j b_i, \ \forall \ (i,j) \in \mathsf{E}\setminus \{(1,2), (3,4) \}
    \end{cases}
\end{equation}
It is clear that (\ref{eq:graph}) $\implies$ (\ref{eq:non-monomial}) and hence $\cR_\Gamma \subseteq \cR(V,K)$. Conversely, let $a,b$ be two non-proportional vectors satisfying the equations (\ref{eq:non-monomial}) and suppose that $a_1 b_2 - a_2 b_1 \neq 0$. Then, since $a_3 b_4 \neq a_4 b_3$, $a_1 b_3 = a_3 b_1$ and $a_2 b_3 = a_3 b_2$, we immediately obtain that $b_1, b_2$ and $b_3$ should be non-zero. But then, the equations $a_1 b_3 = a_3 b_1$ and $a_2 b_3 = a_3 b_2$ ensure that $a_1 b_2 = a_2 b_1$, leading to a contradiction. Consequently, $\cR(V,K) \subseteq \cR_\Gamma$. 
\end{proof}

We emphasize in the example below that the equality obtained in Proposition \ref{prop_triangle} is only set-theoretical. When endowed with the natural schemes structures described in Section \ref{sec:2}, the resonance scheme $\cR^{\ann}_\Gamma$ (or, respectively, $\cR^{\operatorname{Fitt}}_\Gamma$) may be different from the scheme $\cR^{\ann}(V,K)$ (or $\cR^{\operatorname{Fitt}}(V,K)$).

\begin{exmp} \label{ex:non-reduced}
Let us take $n = 4$ and $\Gamma = (\mathsf{V}, \mathsf{E})$, where $\mathsf{E} = \{(1,2), (1,3), (2,3), (3,4) \}$. If the subspace $K$ is generated by $e_1 \wedge e_2 + e_3 \wedge e_4, e_1 \wedge e_3$ and $e_2 \wedge e_3$, then, according to Proposition \ref{prop_triangle}, the resonance variety $\cR(V,K)$ will coincide with the resonance of $\Gamma$, which is $\operatorname{Span}\{f_1, f_2, f_4\}$. However, $\ann W(V,K) = (x_3^2) \neq (x_3) = \ann W_\Gamma$ and thus, although the resonance scheme $\cR^{\ann}_\Gamma$ is reduced, $\cR^{\ann}(V,K)$ is not. As for the Fitting scheme structures, we remark that $\fitt W(V,K) = (x_3^2) \cap (x_1, x_2, x_3^3, x_4)$, whereas $\fitt W_\Gamma = (x_3) \cap (x_1, x_2, x_3^2)$.
\end{exmp}

Another method of constructing non-monomial subspaces having a monomial resonance is presented in the next result.

\begin{prop} \label{prop_terminal_vertex}
If the graph $\Gamma$ has a terminal vertex, there exists a non-monomial subspace $K$ such that $\cR_\Gamma = \cR(V,K)$.
\end{prop}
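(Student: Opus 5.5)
The plan is to produce $K$ as the image of the monomial subspace $K_\Gamma$ under a suitable automorphism $g\in GL(V)$. The point is that resonance is natural with respect to automorphisms: $\cR(V,gK)=(g^\vee)^{-1}\cR(V,K)$, where $g^\vee$ is the dual map. So it suffices to find $g$ such that $gK_\Gamma$ is not monomial in the basis $\{e_i\}$, while $g^\vee$ preserves the union of coordinate subspaces in (\ref{eq:res_coordinate}).

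Let $1$ be the terminal vertex and $(1,2)$ its unique edge. First I would show that $1$ belongs to no $\mathsf{U}\in\cD_\Gamma$, using minimality. If $1\in\mathsf{U}$ and $2\notin\mathsf{U}$, then putting $1$ back into $\Gamma\setminus\mathsf{U}$ only attaches it to the component of $2$. Hence $\Gamma\setminus(\mathsf{U}\setminus\{1\})$ is still disconnected, which contradicts minimality. If $1,2\in\mathsf{U}$, then $1$ is isolated in $\Gamma\setminus(\mathsf{U}\setminus\{1\})$, and since $\mathsf{V}\setminus\mathsf{U}\neq\emptyset$ this graph is again disconnected, a contradiction. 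Consequently, by Theorem \ref{thm_structure_ideal_res}, $f_1$ lies in every component $V^\vee_{\Gamma'}$ of $\cR_\Gamma$.

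Now take a vertex $j\neq 1,2$ with $(2,j)\notin\mathsf{E}$, assuming one exists. Define $g$ by $g(e_1)=e_1+e_j$ and $g(e_i)=e_i$ for $i\ge 2$. Then $g^\vee f_j=f_j+f_1$ and $g^\vee f_i=f_i$ for $i\neq j$. Since $f_1$ lies in every coordinate component, $g^\vee$ maps each $V^\vee_{\Gamma'}$ into itself, hence bijectively onto it, so $\cR(V,gK_\Gamma)=\cR_\Gamma$. On the other side, every generator $e_k\wedge e_l$ of $K_\Gamma$ with $1\notin\{k,l\}$ is fixed, and $e_1\wedge e_2\mapsto e_1\wedge e_2+e_j\wedge e_2$. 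Thus $K:=gK_\Gamma$ has a basis consisting of this vector together with the monomials $e_k\wedge e_l$, $(k,l)\in\mathsf{E}\setminus\{(1,2)\}$. Neither $e_1\wedge e_2$ nor $e_j\wedge e_2$ lies in $K$, because $(2,j)\notin\mathsf{E}$. Hence $K$ is not monomial in the basis $\{e_i\}$.

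The remaining case, which I expect to be the main obstacle, is when $2$ is adjacent to every other vertex, so that the twist above gives back $K_\Gamma$. If some edge $(k,l)$ avoids $2$, then $\{2,k,l\}$ is a triangle and Proposition \ref{prop_triangle} applies. Otherwise $\Gamma$ is the star centred at $2$, and $\cR_\Gamma$ is the hyperplane $\{x_2=0\}$. In this case any $g$ whose dual preserves the hyperplane fixes the line $\bk e_2$, so it fixes $K_\Gamma=e_2\wedge V$ and a different construction is needed. I would imitate the proof of Proposition \ref{prop_triangle} and take
\[
K=\operatorname{Span}\{e_1\wedge e_2+e_3\wedge e_4\}+\operatorname{Span}\{e_2\wedge e_i : i\ge 3\}.
\]
One then checks directly on the equations, as in (\ref{eq:non-monomial}), that for non-proportional $a,b$ the conditions force $a_2=b_2=0$, and that conversely every such pair satisfies them. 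This verification is the step most likely to need adjustment, possibly by a different choice of the non-monomial generator.
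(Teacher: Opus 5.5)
Your main case does not produce a non-monomial subspace in the sense of the paper. There, $K$ is \emph{monomial} if it is spanned by elements $e'_i\wedge e'_j$ for \emph{some} basis $\{e'_i\}$ of $V$. Your $gK_\Gamma$ is spanned by the vectors $g(e_k)\wedge g(e_l)$, $(k,l)\in\mathsf{E}$, so it is monomial in the basis $\{g(e_i)\}$, with the same graph $\Gamma$. You only check that it is not monomial in the fixed basis $\{e_i\}$, which is a much weaker property.

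Indeed $(V,gK_\Gamma)\cong(V,K_\Gamma)$. Hence $W(V,gK_\Gamma)$ is $W_\Gamma$ up to a linear change of coordinates, its annihilator scheme is still reduced, and $\Gamma$ is recovered. The point of the statement is to find a $K$ outside the $GL(V)$-orbits of monomial subspaces. Compare the examples after Propositions \ref{prop_triangle} and \ref{prop_terminal_vertex}, where $\cR^{\ann}(V,K)$ is non-reduced, which never happens for a monomial subspace. So no argument by automorphisms can work. Your naturality formula and your proof that $1$ lies in no minimal disconnecting set are correct, but they only show that $gK_\Gamma\neq K_\Gamma$ has the same resonance.

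The missing idea is the one you use only for the star, and it is the paper's construction for every graph. Replace the generator $e_1\wedge e_2$ by the indecomposable $e_1\wedge e_2+e_3\wedge e_4$, where $(3,4)\notin\mathsf{E}$ and $3,4\neq1,2$.
\begin{itemize}
\item Since $1$ is terminal, $b_1$ appears only in the modified equation.
\item For $a_2\neq0$ you pass between partners of $a$ for $K_\Gamma$ and for $K$ by changing only the first coordinate: $b'_1=(a_1b_2-a_3b_4+a_4b_3)/a_2$, resp. $b_1=a_1b'_2/a_2$. This preserves non-proportionality.
\item For $a_2=0$, the vector $f_1$ is a partner for both (use $f_3$ if $a\in\bk f_1$).
\end{itemize}
In your star case, the claim that \emph{every} non-proportional pair with $a_2=b_2=0$ satisfies the equations is false, e.g. $a=f_3$, $b=f_4$; you only need one partner for each $a$.

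If no such non-edge exists, $\{3,\dots,n\}$ is a clique. This gives a triangle unless $\Gamma=P_4$, a case that the paper's ``for instance $(3,4)\notin\mathsf{E}$'' also passes over. For $P_4$ nothing better than a twist exists. Put $H_i=\{x_i=0\}\subseteq V^\vee$.
\begin{itemize}
\item The decomposable vectors of $K^\perp$ must be exactly $(K^\perp\cap\bigwedge^2H_2)\cup(K^\perp\cap\bigwedge^2H_3)$, each piece of dimension at least $2$.
\item These pieces form the zero set of the Pfaffian restricted to $K^\perp$, and they meet only inside $\bigwedge^2(H_2\cap H_3)$. This forces $\dim K^\perp=3$.
\item Then $K^\perp=\langle p\wedge q,\,p\wedge g,\,q\wedge g'\rangle$ for some basis $p,q,g,g'$ of $V^\vee$, i.e. $K$ is $K_{P_4}$ in another basis.
\end{itemize}
So for $P_4$ the statement cannot be proved in the paper's sense, by your method or any other.
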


\begin{proof}
Without loss of generality, suppose that $1$ is a terminal vertex and $(1,2) \in \mathsf{E}$. If $\Gamma$ contains a triangle, the conclusion already follows from Proposition \ref{prop_triangle}, so we may also assume that, for instance, $(3,4) \notin \mathsf{E}$.

Define $K$ to be the subspace of $\bigwedge^2V$ generated by $e_1 \wedge e_2 + e_3 \wedge e_4$ and all the totally decomposable vectors $e_i \wedge e_j$ such that $(i,j) \in \mathsf{E} \setminus \{(1,2)\}$. Equivalently, $K^\perp$ is generated by $f_1 \wedge f_2 - f_3 \wedge f_4$ and all the totally decomposable vectors $f_i \wedge f_j$ such that $(i,j) \notin \mathsf{E}$.

Let $a  = \sum_{i=1}^n a_i f_i \in V^\vee$ be a non-zero vector. Note that $a \in \cR_\Gamma$ if and only if there exists $b = \sum_{i=1}^n b_i f_i$, which is non-proportional to $a$, such that
\begin{equation} \label{eq:graph_terminal}
    a_i b_j = a_j b_i, \ \forall \ (i,j) \in \mathsf{E}
\end{equation}
and $a \in \cR(V,K)$ if and only if there exists a vector $b' = \sum_{i=1}^n b'_i f_i$, which is non-proportional to $a$, such that
\begin{equation} \label{eq:non-monomial_terminal}
    \begin{cases}
        a_1 b'_2 - a_2 b'_1 = a_3 b'_4 - a_4 b'_3 \\
        a_i b'_j = a_j b'_i, \ \forall \ (i,j) \in \mathsf{E}\setminus \{(1,2)\}
    \end{cases}
\end{equation}
If $a_2 = 0$, then $a \in \cR_\Gamma \cap \cR(V,K)$, since we may take $b = b' = f_1$. Therefore, we assume that $a_2 \neq 0$.

Suppose that $a \in \cR_\Gamma$. Then, there exists a vector $b \in V^\vee$, such that $a \wedge b \neq 0$ and the components of $b$ with respect to the basis $\{f_1, \ldots, f_n\}$ satisfy the relations (\ref{eq:graph_terminal}). Let us take $b' = \sum_{i=1}^n b'_i f_i$, where $b'_i = b_i$, for all $i \ge 2$ and
\[
b'_1 = \frac{1}{a_2}(a_1 b_2 - a_3 b_4 + a_4 b_3)
\]
It is then straightforward to check that $a \wedge b' \neq 0$ and the components of $b'$ satisfy the relations (\ref{eq:non-monomial_terminal}). Therefore, $a \in \cR(V,K)$.

Suppose now that $a \in \cR(V,K)$. Then, there exists a vector $b' \in V^\vee$, such that $a \wedge b' \neq 0$ and the components of $b'$ with respect to the basis $\{f_1, \ldots, f_n\}$ satisfy the relations (\ref{eq:non-monomial_terminal}). Consider now $b = \sum_{i=1}^n b_i f_i$, where $b_i = b'_i$, for all $i \ge 2$ and
\[
b_1 = \frac{a_1 b_2}{a_2}
\]
Note that $a \wedge b \neq 0$ and the components of $b$ satisfy the relations (\ref{eq:graph_terminal}), hence $a \in \cR_\Gamma$.
\end{proof}

\begin{exmp}
By taking $n = 4$ and $\Gamma = (\mathsf{V}, \mathsf{E})$, with $\mathsf{E} = \{(1,3), (2,3), (3,4) \}$ and by choosing $4$ as a terminal vertex, the recipe described in Proposition \ref{prop_terminal_vertex} yields the same linear subspace $K$ as in Example \ref{ex:non-reduced}. Hence, the annihilator scheme structures of $\cR_\Gamma$ and $\cR(V,K)$ are different: the former is always reduced, while the latter is non-reduced. The Fitting scheme structures, however, coincide: $\fitt W_\Gamma = (x_3^2) \cap (x_1, x_2, x_3^3, x_4) = \fitt W(V,K)$.
\end{exmp}

\begin{prop} \label{prop_disconnecting_vertex}
If the graph $\Gamma$ has a disconnecting vertex, there exists a non-monomial subspace $K$ such that $\cR_\Gamma = \cR(V,K)$.
\end{prop}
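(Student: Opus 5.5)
The plan is to reduce to a normal form using Propositions \ref{prop_triangle} and \ref{prop_terminal_vertex}, and then perturb one monomial generator as in those proofs, choosing the perturbation so that the disconnecting structure absorbs it. If $\Gamma$ contains a triangle or a terminal vertex, we are already done by Proposition \ref{prop_triangle} or Proposition \ref{prop_terminal_vertex}. So assume $\Gamma$ is triangle-free with all degrees $\ge 2$, and let $1$ be a disconnecting vertex. Let $\Gamma\setminus\{1\}$ have components $A$ and $B\cup\ldots$, and pick $2\in A$ adjacent to $1$. Since $\Gamma$ is triangle-free with minimum degree $\ge 2$, each component of $\Gamma\setminus\{1\}$ has at least two vertices. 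Choose two vertices $3,4$ lying in a component different from $A$ with $(3,4)\notin\mathsf{E}$. If every such component is complete, triangle-freeness forces it to be a single edge whose endpoints are both adjacent to $1$, giving a triangle, so this case does not occur (I would double-check this reduction carefully). Then set
\[
K=\langle e_1\wedge e_2+e_3\wedge e_4\rangle+\langle e_i\wedge e_j : (i,j)\in\mathsf{E}\setminus\{(1,2)\}\rangle .
\]
Non-monomiality follows exactly as in the previous propositions.

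For the inclusion $\cR_\Gamma\subseteq\cR(V,K)$ I will use Theorem \ref{thm_structure_ideal_res} only loosely and argue directly with pairs $(a,b)$ satisfying the analogues of \eqref{eq:graph_terminal} and \eqref{eq:non-monomial_terminal}. First take $a$ with $a_1=0$; this is the component of $\cR_\Gamma$ coming from $\{1\}\in\cD_\Gamma$. Take $b$ to be the restriction of $a$ to the coordinates in $A$, or any nonzero vector supported on $A$ if $a|_A=0$. Then every edge relation holds: edges inside $A$ hold by proportionality, edges outside $A$ hold because $b$ vanishes there, and edges at $1$ hold because $a_1=b_1=0$. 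Both sides of the twisted equation vanish, since $b_1=a_1=0$ and $b_2a_1=0$ on the left, and $b_3=b_4=0$ on the right. Non-proportionality is arranged by switching to a vector supported on another component when needed. For points of $\cR_\Gamma$ with $a_1\neq 0$, they lie in coordinate subspaces $V^\vee_{\Gamma'}$ with $1\in\Gamma'$. There I would take the witness $b$ supported on a connected component of $\Gamma'$ not containing $1$, or apply the correction trick of Proposition \ref{prop_terminal_vertex} to a free coordinate, so that the extra term $a_3b_4-a_4b_3$ is matched.

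For the reverse inclusion, take $(a,b')$ satisfying the twisted system. If $a_1b'_2-a_2b'_1=0$, the pair already satisfies \eqref{eq:graph_terminal} and we are done. Otherwise $a_3b'_4\neq a_4b'_3$, and I will exploit the untouched edge relations along paths. The edges at $1$ other than $(1,2)$ together with the edges inside $A$ force $a$ and $b'$ to be proportional on the connected piece containing $1$, once the relevant coordinates are nonzero. Similarly the edge relations inside the component $C\ni 3,4$ force proportionality on $C$, which contradicts $a_3b'_4\neq a_4b'_3$ unless some vertex of $C$ has $a=b'=0$ there. In that case the support of $(a,b')$ breaks $\Gamma$, and I will produce a new witness $b$ supported on one side of the break, as in the $a_1=0$ argument, so that $a\in\cR_\Gamma$.

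The main obstacle is this converse step, which requires a careful case analysis of the zero patterns of $(a,b')$. Unlike in Proposition \ref{prop_terminal_vertex}, the coordinate $b'_1$ is constrained by other edges and cannot simply be re-solved. What I would try first is to show that the set $Z=\{i : a_i=b'_i=0\}$ always contains a disconnecting set of $\Gamma$, and then conclude via \eqref{eq:res_coordinate}.
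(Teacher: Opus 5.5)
\textbf{There is a genuine gap.} Your construction lets $3,4$ be any non-adjacent pair in a component of $\Gamma\setminus\{1\}$ other than $A$. With that freedom it does not give $\cR(V,K)=\cR_\Gamma$ in general. The inclusion that breaks is $\cR_\Gamma\subseteq\cR(V,K)$, which you only sketch.

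Here is a counterexample. Take $n=8$ and $\mathsf{E}=\{(1,2),(2,7),(7,8),(1,8),(1,3),(3,5),(4,5),(4,6),(3,6)\}$, i.e. two $4$-cycles joined by the edge $(1,3)$. This graph is triangle-free and has minimum degree $2$. The vertex $1$ is disconnecting, with $A=\{2,7,8\}$ and other component $\{3,4,5,6\}$, and $(3,4)\notin\mathsf{E}$. So it is an instance of your normal form.

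The set $\mathsf{U}=\{5,6\}$ is a minimal disconnecting set: $\Gamma\setminus\mathsf{U}$ has components $\{1,2,3,7,8\}$ and $\{4\}$. Hence every $a$ with $a_5=a_6=0$ lies in $\cR_\Gamma$. Take such an $a$ with all other coordinates non-zero.
\begin{itemize}
\item The untwisted relations on $(3,5)$ and $(3,6)$ give $b_5=b_6=0$.
\item The relations on $(2,7),(7,8),(1,8),(1,3)$ give $b=\lambda a$ on $\{1,2,3,7,8\}$, hence $a_1b_2-a_2b_1=0$.
\item The twisted relation then reduces to $a_3(b_4-\lambda a_4)=0$.
\end{itemize}
So $b=\lambda a$, and $a\notin\cR(V,K)$.

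Both remedies you propose fail on this example. The only component of $\Gamma\setminus\mathsf{U}$ avoiding $1$ is $\{4\}$. A witness supported there makes $a_3b_4-a_4b_3\neq 0$ while $a_1b_2-a_2b_1=0$. There is also no free coordinate to re-solve, precisely because you reduced to minimum degree at least $2$.

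The converse inclusion, which you flag as the main obstacle and leave open, is actually the harmless one. If $a\notin\cR_\Gamma$, then $a_1\neq 0$, and the support of $a$ induces a connected subgraph that every other vertex is adjacent to. A nonzero $a_3b'_4-a_4b'_3$ would force $(1,2)$ to be a bridge of that subgraph with $3$ and $4$ on opposite sides. This is impossible, because the side containing $2$ lies in $A$.

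What is missing is a choice of $3,4$ tied to the cut vertex. Taking $3,4$ to be neighbours of $1$ outside $A$ works; this is possible for a disconnecting vertex of degree $\ge 3$ once $A$ is chosen suitably. The reason is that a minimal disconnecting set not containing $1$ lies inside a single component of $\Gamma\setminus\{1\}$. So $3$ and $4$ fall into that set or into the component of $1$, and a witness supported on another component has $a_3b_4-a_4b_3=0$.

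The paper sidesteps all of this. After disposing of $P_n$ via Proposition \ref{prop_terminal_vertex}, it takes a disconnecting vertex of degree $\ge 3$. It adds an edge between two of its neighbours, creating a triangle without changing the minimal disconnecting sets, hence without changing the resonance. It then applies Proposition \ref{prop_triangle} to the enlarged graph.
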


\begin{proof}
To begin with, note that $\Gamma$ is either the path graph $P_n$ or one of the disconnecting vertices of $\Gamma$ has degree at least $3$. If $\Gamma = P_n$, we may invoke Proposition \ref{prop_terminal_vertex} to conclude.

Otherwise, we may assume without loss of generality that $1$ is a disconnecting vertex and $(1,2), (1,3), (1,4) \in \mathsf{E}$, but $(2,3) \notin \mathsf{E}$. Consider $\Gamma' = (\mathsf{V}, \mathsf{E'})$, where $\mathsf{E'} = \mathsf{E} \cup \{(2,3)\}$ and note that $\Gamma$ and $\Gamma'$ have the same disconnecting sets of vertices. Indeed, the only aspect that may require attention is that $1$ remains a disconnecting vertex in $\Gamma'$. This is ensured by the existence of the edge $(1,4) \in \mathsf{E}$.  Therefore, the resonance of $\Gamma$ is the same with the resonance of $\Gamma'$. Since $\Gamma'$ contains a triangle, we are done by Proposition \ref{prop_triangle}. 
\end{proof}

The first graph which satisfy none of the hypotheses of the previous results is the cycle graph $C_4$. This is a case where the resonance can recover the graph.

\begin{prop} \label{prop_C4}
If $\Gamma = C_4$, there exists no non-monomial subspace $K \subseteq \bigwedge^2V$ such that $\mathcal{R}_\Gamma = \mathcal{R}(V,K)$.
\end{prop}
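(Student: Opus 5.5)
The plan is to translate the resonance condition into a statement about how the projective space $\mathbb{P}(K^\perp)$ meets the Grassmannian of $2$-planes in $V^\vee$. Here $n=4$. Since $\Gamma = C_4$ has edges $(1,2),(2,3),(3,4),(1,4)$, Theorem \ref{thm_structure_ideal_res}, or directly Remark \ref{obs_c4}, gives
\[
\cR_\Gamma = L_1 \cup L_2, \qquad L_1 = \operatorname{Span}\{f_1,f_3\}, \quad L_2 = \operatorname{Span}\{f_2,f_4\}.
\]
Let $K \subseteq \bigwedge^2 V$ be any subspace with $\cR(V,K) = L_1 \cup L_2$. We will show that $K^\perp = \operatorname{Span}\{f_1\wedge f_3, f_2 \wedge f_4\}$. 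This forces $K = (K^\perp)^\perp = \operatorname{Span}\{e_1\wedge e_2, e_2 \wedge e_3, e_3\wedge e_4, e_1\wedge e_4\}$, which is monomial.

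\textbf{Step 1: only the two coordinate planes are witnessed.} By definition, $a \in \cR(V,K)\setminus\{0\}$ if and only if there is a $2$-plane $P = \operatorname{Span}\{a,b\}\subseteq V^\vee$ with $0\neq a\wedge b \in K^\perp$. Any such $P$ is then entirely contained in $\cR(V,K)$, because every nonzero vector of $P$ has a partner in $P$ whose wedge is a nonzero multiple of $a \wedge b$. So each such plane lies in $L_1\cup L_2$. A $2$-dimensional vector space contained in a union of two subspaces lies in one of them, so $P=L_1$ or $P=L_2$. Both planes must actually occur, since the points of $L_1$ and of $L_2$ all lie in the resonance. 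Hence
\[
\mathbb{P}(K^\perp) \cap \mathbf{G}(2,V^\vee) = \{[f_1\wedge f_3],\,[f_2\wedge f_4]\}
\]
inside $\mathbb{P}(\bigwedge^2 V^\vee)\cong \mathbb{P}^5$. In particular $f_1\wedge f_3,\ f_2\wedge f_4 \in K^\perp$, so $\dim K^\perp \ge 2$.

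\textbf{Step 2: the dimension bound.} This is the main point of the argument. Since $\dim V = 4$, the Grassmannian $\mathbf{G}(2,4)$ is the Plücker quadric hypersurface $\{\omega\wedge\omega = 0\}$ in $\mathbb{P}^5$. Suppose $\dim K^\perp \ge 3$. Then $\mathbb{P}(K^\perp)$ contains a projective plane. A projective plane meets a quadric hypersurface of $\mathbb{P}^5$ either in a conic or is entirely contained in it; in both cases, since $\bk$ is algebraically closed, the intersection is infinite. This contradicts the finiteness established in Step 1. Therefore $\dim K^\perp = 2$, and so $K^\perp = \operatorname{Span}\{f_1\wedge f_3, f_2\wedge f_4\}$.

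Consistently, the line through these two points is not contained in the quadric, because $(f_1\wedge f_3 + f_2 \wedge f_4)^{\wedge 2}\neq 0$. It therefore meets the quadric exactly in those two points. This matches the monomial case of Remark \ref{obs_c4}.

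\textbf{Step 3: conclusion.} Dualizing, $K = (K^\perp)^\perp$ is spanned by the four monomials $e_i\wedge e_j$ with $(i,j)\in \mathsf{E}$. So $K$ is the monomial subspace attached to $C_4$, and no non-monomial $K$ can have resonance $\cR_\Gamma$. The only delicate points are the two facts used in Step 1: that every witnessing plane lies in the resonance, and that a plane inside $L_1\cup L_2$ equals $L_1$ or $L_2$. Both are elementary.
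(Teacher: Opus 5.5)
Your proof is correct, and it reaches the key step by a different route from the paper.

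\textbf{Step 1.} This is a tidier version of the paper's first step. The paper's argument that $f_1\wedge f_3,\ f_2\wedge f_4\in K^\perp$ is rather terse. Your observation is that every witnessing plane lies in $\cR(V,K)$, and a $2$-plane inside $L_1\cup L_2$ must equal $L_1$ or $L_2$. This handles that step rigorously. It also shows that these two classes are the only decomposable points of $\mathbb{P}(K^\perp)$.

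\textbf{Step 2.} The paper never bounds $\dim K^\perp$. Instead it shows directly that $K^\perp$ meets $\operatorname{Span}\{f_i\wedge f_j : (i,j)\in\mathsf{E}\}$ trivially. Suppose $\sum\alpha_{ij}f_i\wedge f_j\in K^\perp$. The paper takes $a=\alpha_{14}f_1+f_2+\alpha_{34}f_3$ and $b=-\alpha_{12}f_1+\alpha_{23}f_3+f_4$. Their wedge differs from that combination by an element of $\operatorname{Span}\{f_1\wedge f_3,f_2\wedge f_4\}\subseteq K^\perp$. So $a$ and $b$ would be resonant, and this forces all $\alpha_{ij}=0$.

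You replace this construction with a dimension count. Since $\mathbf{G}(2,4)$ is a quadric hypersurface in $\mathbb{P}^5$, any projective plane in $\mathbb{P}(K^\perp)$ would meet it in infinitely many points, which contradicts Step 1. This is the general principle that a linear space meeting a variety in finitely many points has dimension at most the codimension of that variety.

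\textbf{Comparison.} Your route is shorter and more conceptual: it explains why $\dim K^\perp=2$ is forced rather than verifying it by hand. It does rely on $n=4$, where the Grassmannian is a hypersurface, and on plane conics over $\bk$ having infinitely many points. The paper's argument is an elementary explicit computation that avoids this projective geometry. For each forbidden element of $K^\perp$ it exhibits concrete resonant vectors outside $L_1\cup L_2$.
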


\begin{proof}
Let $K \subseteq \bigwedge^2V$ such that $\cR(V,K) = \operatorname{Span}\{f_1, f_3\} \cup \operatorname{Span}\{f_2, f_4 \}$. We prove that $K^\perp$ is generated by $f_1\wedge f_3$ and $f_2 \wedge f_4$.

First, note that $f_1 \wedge f_3 \in K^\perp$, for otherwise $f_1$ would resonate with a non-trivial linear combination of $f_2$ and $f_4$, that is $f_1 \wedge (\alpha f_2 + \beta f_4) \in K^\perp$. But this condition would imply that $f_1 \wedge (f_1 + \alpha f_2 + \beta f_4) \in K^\perp$ and hence $f_1 + \alpha f_2 + \beta f_4 \in \cR(V,K)$, which is false. By similar arguments, $f_2 \wedge f_4 \in K^\perp$.

By the same type of reasoning, it is also immediate to see that $f_i \wedge f_j \notin K^\perp$ for all $(i,j) \in \mathsf{E}$. It remains to prove that any non-trivial linear combination
\begin{equation} \label{eq:resonant}
    \sum_{(i,j) \in \mathsf{E}} \alpha_{ij} f_i \wedge f_j
\end{equation}
does not belong to $K^\perp$. In this regard, let us consider the linearly independent vectors $a = \alpha_{14} f_1 + f_2 + \alpha_{34} f_3$ and $b = -\alpha_{12} f_1 + \alpha_{23} f_3 + f_4$ and observe that
\[
a \wedge b = \sum_{(i,j) \in \mathsf{E}} \alpha_{ij} f_i \wedge f_j + (\alpha_{12}\alpha_{34} + \alpha_{14}\alpha_{23}) f_1 \wedge f_3 + f_2 \wedge f_4
\]
Since $f_1 \wedge f_3, f_2 \wedge f_4 \in K^\perp$, we deduce that the sum (\ref{eq:resonant}) belongs to $K^\perp$ if and only if $a$ and $b$ belong to the resonance variety $\cR(V,K)$. However, as we have seen, this is possible if and only if $\alpha_{ij} = 0$ for all $(i,j) \in \mathsf{E}$. The conclusion follows.
\end{proof}

\begin{ques}
Is $C_4$ the only graph with the property above?
\end{ques}

\printbibliography

\end{document}